\theoremstyle{plain}
\newtheorem{theorem}{Theorem}[section]
\newtheorem{lemma}[theorem]{Lemma}
\newtheorem{corollary}[theorem]{Corollary}
\theoremstyle{definition}
\newtheorem{definition}[theorem]{Definition}
\theoremstyle{remark}
\newtheorem*{acknowledgements}{Acknowledgements}
\newcommand{\del}{\partial}
\newcommand{\R}{\mathbb{R}}
\newcommand{\Z}{\mathbb{Z}}
\newcommand{\C}{\mathbb{C}}
\newcommand{\half}{\frac{1}{2}}
\begin{document}

\title{Prism complexes}
\author[Kalelkar]{Tejas Kalelkar}
\address{Mathematics Department, Indian Institute of Science Education and Research, Pune 411008, India}
\email{tejas@iiserpune.ac.in}

\author[Nair]{Ramya Nair}
\address{Mathematics Department, Indian Institute of Science Education and Research, Pune 411008, India}
\email{nair.ramya@students.iiserpune.ac.in}

\date{\today}

\keywords{Seifert fiber space, cube complexes}

\subjclass[2010]{Primary 57Q25, 57M99}

\begin{abstract} 
A prism is the product space $\Delta \times I$ where $\Delta$ is a 2-simplex and $I$ is a closed interval. As an analogue of simplicial complexes, we introduce prism complexes and show that every compact $3$-manifold has a prism complex structure. We call a prism complex special if each interior horizontal edge lies in four prisms, each boundary horizontal edge lies in two prisms and no horizontal face lies on the boundary. We give a criteria for existence of horizontal surfaces in (possibly non-orientable) Seifert fiber spaces. Using this we show that a compact 3-manifold admits a special prism complex structure if and only if it is a Seifert fiber space with non-empty boundary, a Seifert fiber space with a non-empty collection of surfaces in its exceptional set or a closed Seifert fiber space with Euler number zero. So in particular, a compact $3$-manifold with boundary is a Seifert fiber space if and only if it has a special prism complex structure.
\end{abstract}

\maketitle

\section{Introduction}\label{intro}
A closed $3$-manifold $M$ is called irreducible if every embedded $2$-sphere in $M$ bounds a $3$-ball. If $M$ is reducible then there exist finitely many disjointly embedded $2$-spheres in $M$ such that after cutting $M$ along these spheres and capping off the spherical boundaries with $3$-balls, the closed manifolds obtained are either irreducible or $S^2 \times S^1$. These capped off pieces can be further decomposed along a canonical collection of disjointly embedded tori into compact manifolds which have three possibilities: they are either finitely covered by torus bundles, they are Seifert fiber spaces with boundary or they have interiors which admit a complete hyperbolic metric. Cube complexes give a nice discrete structure to study hyperbolic manifolds. Gromov gave a combinatorial criterion on cube complexes which ensures that they have a CAT(0) metric. In this article we introduce prism complexes as discrete structures to study Seifert fiber spaces. In particular we give a combinatorial criterion on prism complexes to ensure that the underlying manifold is in fact a Seifert fiber space.

A \emph{Seifert fiber space} is a compact connected $3$-manifold which admits a foliation by circles. Let $D=\{(x, y) \in \R^2 : x^2 + y^2 \leq 1\}$ and let $D^+ = \{(x, y) \in D: x\geq 0\}$. A \emph{regular} fiber in a Seifert fiber space has a fibered neighbourhood fiber preservingly homeomorphic to a fibered solid torus $D \times S^1$ or half solid torus $D^+ \times S^1$ trivially foliated by the leaves $x \times S^1$. Fibers which are not regular are called \emph{exceptional}. Exceptional fibers in the interior of the manifold that are isolated have fibered solid torus neighbourhoods fiber preservingly homeomorphic to the flow of the mapping torus of $D$ via a monodromy that is a rational rotation. Exceptional fibers in the interior of the manifold which are not isolated have fibered solid Klein bottle neighbourhoods fiber preservingly homeomorphic to the flow of the mapping torus of $D$ via the reflection monodromy $r(x,y) =(x, -y)$. Exceptional fibers on the boundary of the manifold are not isolated. They have a fibered half solid Klein bottle neighbourhood fiber preservingly homeomorphic to the flow of the mapping torus of $D^+$ via the reflection monodromy $r(x, y)=(x, -y)$. All of these fiber-preserving homeomorphisms take the exceptional fiber to the fiber above $0\in D$ (or $0 \in D^+$). The connected components of the set of exceptional fibers are therefore circles, annuli, tori or Klein bottles. We denote by $SE(M)$ the collection of annuli, tori and Klein bottle components of the exceptional set. When $M$ is orientable, $SE(M)=\emptyset$. When $M$ is closed we denote by $e(M)$ the Euler number of the manifold. A good reference for Seifert fiber spaces is the survey paper by Scott\cite{Sco} and the preprint of a book by Hatcher\cite{Hat}.

A \emph{prism} is the product space $\Delta \times I$ where $\Delta$ is a 2-simplex and $I$ a closed interval. We call the edges of $\del \Delta \times \del I$ \emph{horizontal} and the rest of the edges of the prism we call \emph{vertical}. Similarly, we call the faces in $\Delta \times \del I$ \emph{horizontal} and faces in $\del \Delta \times I$ \emph{vertical}. A \emph{prism complex} is a 3-dimensional cell complex in which each cell is a prism, the attaching maps are combinatorial isomorphisms and furthermore, horizontal edges are identified only with horizontal edges. We call a prism complex \emph{special} if each horizontal edge in the interior of the complex lies in four prisms, each boundary horizontal edge lies in two prisms and no horizontal face lies on the boundary of the complex. 
 
We prove in this paper that a special prism complex can be thought of as a discrete version of the local fibration of a Seifert fiber space:
 \begin{theorem}\label{mainthm} 
Every compact $3$-manifold $M$ admits a prism complex structure. Moreover, it admits a special prism complex structure if and only if it is a Seifert fiber space with $\del M \neq \emptyset$ or $SE(M)\neq \emptyset$ or $e(M)=0$.
 \end{theorem}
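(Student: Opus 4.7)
The plan is to treat the existence assertion and the two directions of the special-structure equivalence separately. For the existence of a prism complex structure on any compact 3-manifold, I would start from a triangulation $T$ of $M$ (which exists by Moise's theorem) and subdivide each 3-simplex into finitely many prisms in a manner compatible across shared 2-faces. One concrete subdivision: in a tetrahedron $ABCD$, the plane through the edge midpoints $M_{AB},M_{AC},M_{AD}$ meeting at $A$ separates $ABCD$ into a small tetrahedron at $A$ and a frustum over $BCD$, and the frustum is combinatorially a triangular prism. Iterating this, together with a barycentric-style refinement to handle the residual tetrahedral piece, produces a prism decomposition of each 3-simplex whose restrictions to the original 2-faces of $T$ match, so the cell structure inherits the prism complex axioms and the horizontal-to-horizontal gluing condition.

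For the forward direction of the special-structure equivalence, suppose $M$ has a special prism complex structure. The $I$-direction in each prism defines a vertical line field; the horizontal-to-horizontal gluing axiom ensures that this line field extends continuously across every horizontal face, and since no horizontal face lies on $\partial M$, a vertical integral curve can terminate only at $\partial M$. Because exactly four prisms meet at each interior horizontal edge and exactly two meet at each boundary horizontal edge, the link of each horizontal edge in $M$ is a full circle or a closed arc, respectively; hence the vertical integral curves close up into circles (or into arcs with endpoints on $\partial M$), producing a Seifert fibration of $M$ with the possible exceptional fibers concentrated along vertical edges of the complex. A connected component of the union of horizontal 2-cells is then a horizontal surface $\Sigma$ for this fibration, and the existence of $\Sigma$ forces one of the trichotomy conditions: $\partial M \neq \emptyset$, or $SE(M) \neq \emptyset$ (when $\Sigma$ picks up a non-orientable exceptional component), or $e(M)=0$ (when $M$ is closed and orientable), by the classical characterization of horizontal surfaces in Seifert fiber spaces recalled from Scott and Hatcher.

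For the reverse direction, let $M$ be a Seifert fiber space satisfying one of the three conditions. I would apply the paper's own criterion for the existence of a horizontal surface in a (possibly non-orientable) Seifert fiber space to produce an embedded horizontal surface $\Sigma \subset M$ meeting every fiber transversely. Cutting $M$ along $\Sigma$ yields an $I$-bundle (possibly twisted over non-orientable components of $SE(M)$) with base $\Sigma$. Triangulating $\Sigma$ so that its 1-skeleton contains the projections of all exceptional fibers and taking the corresponding (twisted) product with $I$ realizes each 2-simplex of $\Sigma$ as the horizontal face of a prism; gluing the prisms back along $\Sigma$ yields a prism complex in which each interior horizontal edge bounds four prisms (two triangles on each side of $\Sigma$), each boundary horizontal edge bounds exactly two prisms, and no horizontal face lies on $\partial M$. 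Thus the resulting structure is special.

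The main obstacle is the reverse direction in the presence of non-orientable exceptional components, where the $I$-bundle cut by $\Sigma$ twists around Klein-bottle and annulus components of $SE(M)$, and the triangulation of $\Sigma$ must be chosen so that the resulting prism gluings remain combinatorial isomorphisms respecting the horizontal-to-horizontal axiom. The necessity of the trichotomy in the forward direction is correspondingly delicate and relies essentially on the paper's refined criterion for horizontal surfaces in non-orientable Seifert fibered spaces, which is where the condition $SE(M)\neq\emptyset$ enters in place of the classical closed-orientable condition $e(M)=0$.
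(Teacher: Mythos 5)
Your overall architecture matches the paper's (subdivide a triangulation into prisms; special structure $\Rightarrow$ circle foliation $\Rightarrow$ Seifert fibration with a horizontal surface $\Rightarrow$ trichotomy via the horizontal-surface criterion; and conversely build prisms over a triangulated horizontal surface), but two of your three parts have genuine gaps. First, the existence construction: cutting off the corner at $A$ with the midpoint plane leaves a residual tetrahedron similar to the original, so ``iterating'' never terminates, and the phrase ``barycentric-style refinement to handle the residual piece'' does not resolve this. Worse, the compatibility across shared $2$-faces fails as stated: the face $ABC$ gets subdivided by the segment $M_{AB}M_{AC}$ in the tetrahedron $ABCD$, but an adjacent tetrahedron $ABCE$ whose cut-off vertex is $E$ leaves $ABC$ unsubdivided. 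The paper avoids both problems by a different device: it passes to the barycentric subdivision and reinterprets each simplex $[\beta(\Delta),\beta(F),\beta(e),v]$ as a \emph{single} prism (adding two vertices and one edge to split one face), with no residual piece; the canonical roles of $\beta(\Delta)$ and $v$ make the choice of ``top'' and ``bottom'' globally consistent and guarantee that horizontal edges meet only horizontal edges.

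Second, and more seriously, the reverse direction of the equivalence is missing its key ingredient. After cutting along the horizontal surface $\Sigma$ you get $M=F\times I/\phi$ with $\phi$ periodic (or $F\times I/\psi_i$ with $\psi_i$ fixed-point-free involutions, when $M\setminus\Sigma$ is disconnected --- a case your sketch does not separate out); for $\tau\times I$ to descend to a prism complex on the quotient, the triangulation $\tau$ of $F$ must be invariant under the finite group generated by the monodromy, so that the attaching maps are combinatorial isomorphisms. Requiring the $1$-skeleton to contain the projections of exceptional fibers does not produce such a $\tau$. This is exactly what the paper's Lemma~\ref{periodicsim} supplies, via an averaged metric and the Riemannian center of mass (Karcher mean) to equivariantly triangulate the intersection polyhedra of the translates of a geodesic triangulation; you name this as ``the main obstacle'' but offer no mechanism for overcoming it. A smaller point in your forward direction: fibers of a Seifert fibration are circles, so your parenthetical ``or into arcs with endpoints on $\partial M$'' is inconsistent with the conclusion; the hypothesis that no horizontal face lies on $\partial M$ forces every vertical integral curve to close up, and one still needs Epstein's theorem (a compact $3$-manifold foliated by circles is Seifert fibered) to pass from the circle foliation to a Seifert fibration.
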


So in particular, if $M$ is a compact 3-manifold with boundary, then it admits a special prism complex structure if and only if it is a Seifert fiber space.\\

Most of the literature on Seifert fiber spaces deals only with oriented Seifert fiber spaces with the corresponding results for non-oriented spaces being folklore. To prove our result for all Seifert fiber spaces, we explicitly give a general criteria for existence of horizontal surfaces. A \emph{horizontal} surface in a Seifert fiber space $M$ is an embedded surface that is transverse to all the circle fibers of $M$.

\begin{theorem}\label{SFSThm}
Let $M$ be a Seifert fiber space.
\begin{enumerate}
\item When $\partial M\neq \emptyset$ or $SE(M)\neq \emptyset$ then horizontal surfaces exist in $M$.
\item When $\partial M=\emptyset$ and $SE(M)=\emptyset$ then horizontal surfaces exist if and only if $e(M)=0$.
\end{enumerate}
\end{theorem}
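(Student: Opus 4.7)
The plan is to prove Theorem \ref{SFSThm} by handling the three sufficient cases for existence separately and then establishing the Euler-number obstruction for the converse in part (2). Let $p: M \to B$ denote the projection onto the base $2$-orbifold of $M$, whose singular locus comprises isolated interior cone points (coming from isolated exceptional fibers), reflector curves (coming from the components of $SE(M)$), and manifold boundary (coming from $\partial M$). A horizontal surface in $M$ corresponds to a finite (branched, reflection) orbifold cover $\tilde B \to B$ over which the pulled-back Seifert fibration is the product $\tilde B \times S^1$, and the construction is easiest to describe in this language.

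If $\partial M \neq \emptyset$, then $B$ has non-empty manifold boundary and deformation retracts onto a $1$-complex $\Gamma$. The Seifert fibration restricted to a regular neighbourhood of $\Gamma$ is a circle bundle over a graph and so admits a section; I would take its preimage as the starting piece of the horizontal surface and extend across each interior cone point of order $n$ by inserting an $n$-fold branched cover of the disk, and across each reflector locus by doubling via the local reflection. If $\partial M = \emptyset$ but $SE(M)\neq \emptyset$, I would excise a fibered regular neighbourhood $N(S)$ of a component $S \in SE(M)$; since $\partial N(S)$ is a collection of tori or Klein bottles transverse to fibers, $M\setminus \mathrm{int}\, N(S)$ is a Seifert fiber space with non-empty boundary. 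The previous sub-case yields a horizontal surface on this piece which, by the reflection symmetry on the reflector locus, can be arranged to extend back into $N(S)$ and close up to a horizontal surface in $M$.

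For part (2) I would adapt the classical obstruction-theoretic argument of Waldhausen to the non-orientable setting. Over $B$ minus the cone points the fibration is trivial and admits a section; the obstruction to extending this section to a finite-sheeted horizontal surface over all of $B$ is an element of a suitable (possibly twisted) $H^2$ group that matches the Euler number $e(M)$, so $e(M)=0$ yields the horizontal surface after an appropriate explicit assembly from Seifert invariants. Conversely, a horizontal surface $F$ gives a finite fiber-preserving branched cover $\tilde M\to M$ with $\tilde M \cong F\times S^1$, so $e(\tilde M)=0$, and multiplicativity of the Euler number under such covers forces $e(M)=0$.

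The main obstacle I expect is the non-orientable bookkeeping in part (1), especially across reflector curves: fiber orientations must be tracked consistently on either side of each reflector locus, and one must verify that the local horizontal pieces assemble into an embedded surface transverse to every fiber, handling torus and Klein bottle components of $SE(M)$ separately. A secondary subtlety is pinning down a convention for the Euler number of a closed non-orientable Seifert fiber space so that the converse of part (2) is cleanly stated and the multiplicativity argument used above goes through.
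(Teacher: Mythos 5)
Your overall architecture for Cases II and III agrees with the paper's: reduce the closed case with $SE(M)\neq\emptyset$ to the boundary case by excising a fibered neighbourhood of an exceptional surface, and identify $e(M)$ as the sole obstruction in the closed case with $SE(M)=\emptyset$ (the paper does the latter by a direct signed count of $S\cap B_0$ rather than by obstruction theory and multiplicativity of $e$ under the cover $F\times S^1\to M$, but both are standard and workable, modulo fixing the convention for $e$ of a non-orientable fibration, which you flag).

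The genuine gap is in your construction for $\partial M\neq\emptyset$. A section over a neighbourhood of the spine $\Gamma$ meets each regular fiber once, whereas the meridian disk of a fibered solid torus with invariants $(p_i,q_i)$ meets regular fibers $p_i$ times; so ``inserting a branched cover of the disk'' cannot be glued to a one-sheeted section. You must instead start with $n$ parallel copies of the section, with $n$ a common multiple of all the $p_i$ (and even, so that the trace on each fiber over a reflector curve can be made invariant under the antipodal map $z\mapsto -z$, which a single point never is). Even then there is a second, more serious mismatch: the $n$ parallel section copies meet the torus $T_i=\partial N(f_i)$ in $n$ curves of slope $0$, while the $n/p_i$ meridian disks of $V_i$ have boundary of slope $q_i/p_i$; these curve systems are not isotopic on $T_i$, so the two pieces do not glue as described. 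One must deliberately twist the surface near each $T_i$ to realise the slope $q_i/p_i$ and then push the accumulated twisting into $\partial M$ --- this is exactly the content of the paper's Lemma \ref{welltwisted} and the $\alpha_j,\beta_j$ decomposition of Figure \ref{Dehnfill}, and it is also precisely the mechanism whose failure in the closed case produces the obstruction $\sum q_i/p_i+b=e(M)$. As written, your Case I would already break for a Seifert-fibered solid torus with one exceptional fiber of multiplicity $p>1$: there is no horizontal surface meeting regular fibers once, yet your recipe purports to produce one. The paper's choice $n=2p_1\cdots p_{r+1}$ together with the antipodal- and reflection-invariant fiber set $P(n)$ of Lemma \ref{wellspaced} is what simultaneously resolves the sheet-count, the slope, and the non-orientable (reflector-curve) bookkeeping that you correctly identify as the delicate point.
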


 \section{Seifert fiber spaces} 
This section deals with the construction of Seifert fiber spaces and a proof of Theorem \ref{SFSThm}. A complete combinatorial description for Seifert fiber spaces, which includes the non-oriented spaces, is explained in detail by Cattabriga et al\cite{CMMN}:
  
\begin{theorem}[Theorem A of \cite{CMMN}]\label{SFSClassification}
Every Seifert fiber space is uniquely determined, up to fiber-preserving homeomorphism, by the normalised set of parameters
$\{b;(\epsilon,g,(t,k));(h_1,...,h_{m_+} \mid k_1,...,k_{m_-});((p_1,q_1),...,(p_r,q_r))\}$.
\end{theorem}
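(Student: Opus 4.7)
The plan is to establish both existence and uniqueness of the normalized parameter set. For existence, given a Seifert fiber space $M$, first project $M$ onto its base $B$ via the quotient that collapses each fiber to a point. The topological type of the underlying surface of $B$ encodes $\epsilon$ (orientability class) and $g$ (genus). The remaining data comes from the orbifold/decoration structure on $B$: cone points coming from isolated interior exceptional fibers yield the pairs $(p_i, q_i)$, while boundary components of $B$ split into those arising from ordinary boundary of $\partial M$ (tracked by $h_1,\ldots,h_{m_+}$ for fibered solid torus neighborhoods and $k_1,\ldots,k_{m_-}$ for half solid Klein bottle neighborhoods), and those arising from annulus or Klein bottle components of $SE(M)$ that meet $\partial M$. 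The remaining torus and Klein bottle components of $SE(M)$ sit in the interior of $M$ and contribute the pair $(t,k)$.

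To produce the integer $b$, remove open fibered tubular neighborhoods of every isolated exceptional fiber and every component of $SE(M)$. What remains is an $S^1$-bundle over a compact surface with nonempty boundary, which is trivializable; pick a section. The full Seifert structure on $M$ is reconstructed by prescribed gluings of this trivial bundle back to each deleted neighborhood along its boundary torus or Klein bottle. The integer $b$ records the remaining ambiguity in the chosen section, normalized by requiring $0 \leq q_i < p_i$ together with parallel conventions on the $h_i$ and $k_j$.

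For uniqueness, given $M, M'$ with identical normalized invariants, build a fiber-preserving homeomorphism in stages. First match base orbifolds by a homeomorphism carrying cone points, reflector loci, and boundary components correctly; this exists because all combinatorial data agree. Next lift this homeomorphism across the trivialized $S^1$-bundle piece, which is essentially unique up to isotopy since such bundles over surfaces with boundary are products. Finally extend across each removed fibered neighborhood using the standard fiber-preserving automorphism groups of the fibered solid torus, solid Klein bottle, and half solid Klein bottle; the normalization of $b$ and of the pairs $(p_i,q_i)$ ensures the boundary gluings can be matched without residual twisting.

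The main obstacle is the non-orientable analysis: the solid Klein bottle and half solid Klein bottle models admit orientation-reversing fiber-preserving self-homeomorphisms that act nontrivially on the would-be Seifert pairs, and similar symmetries act on non-orientable base surfaces and on Klein bottle components of $SE(M)$. Producing a list of invariants that is \emph{complete with no leftover ambiguity} requires an explicit enumeration of the moves among $\{b;(\epsilon,g,(t,k));(h_i\mid k_j);(p_i,q_i)\}$ that preserve the fiber-preserving homeomorphism class, and verifying that the normalization proposed in \cite{CMMN} kills precisely these moves; this is the delicate combinatorial core of the argument.
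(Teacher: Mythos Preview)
The paper does not prove this statement at all: Theorem~\ref{SFSClassification} is quoted verbatim as Theorem~A of \cite{CMMN} and used as a black box. What follows in the paper is only an outline of the \emph{construction} of a Seifert fiber space from a given parameter list (the five-step procedure with Dehn fillings and attachments of $S^1\times N$, $S^1\widetilde{\times}N$, and $I\times N$), not a proof of classification or uniqueness. So there is no proof in the paper against which to compare your proposal.

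As to the proposal itself: it is a reasonable high-level outline of the standard strategy, and in broad strokes it matches what \cite{CMMN} actually carries out. But it is not a proof---as you yourself acknowledge, the ``delicate combinatorial core'' (enumerating the moves on the parameter list that preserve the fiber-preserving homeomorphism type, and verifying that the normalization kills exactly these) is asserted rather than performed. In particular, the non-orientable case analysis you flag as the main obstacle is precisely where the work lies, and your proposal does not supply it. If the intent was to reproduce the paper's treatment, you should simply cite \cite{CMMN} as the paper does; if the intent was to give an independent proof, substantially more detail is required in the uniqueness direction.
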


See Section 2 of \cite{CMMN} for a description of the parameters in the above theorem and for an explicit construction of a Seifert fiber space with the above parameters. We give below an outline of the construction:\\
\begin{figure}
\centering
\def\svgwidth{0.6\columnwidth}
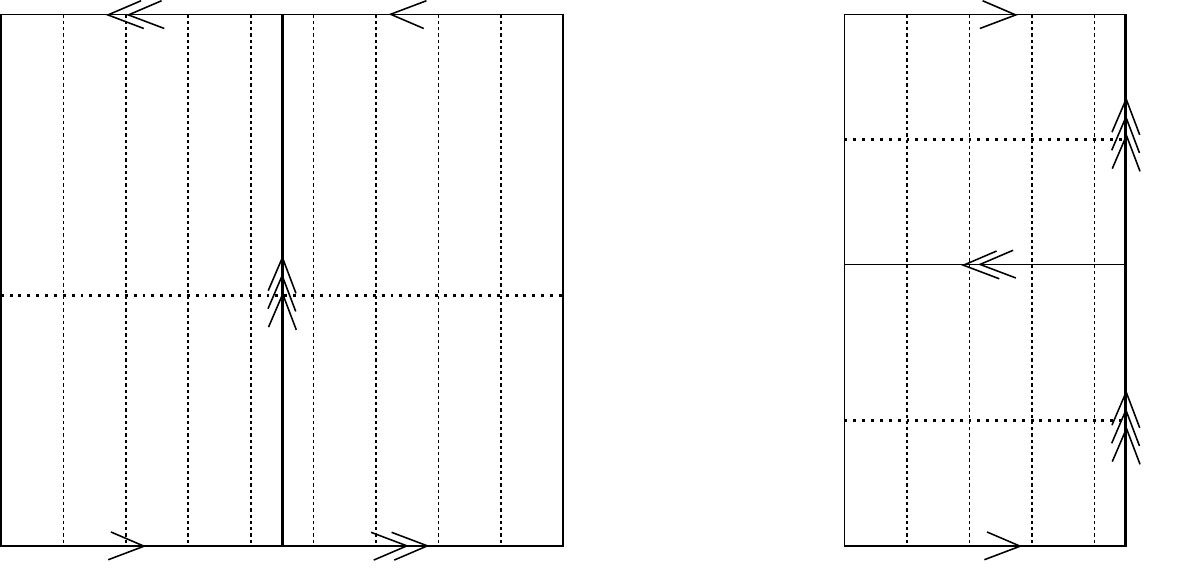
\caption{Different representations of the same fibered Mobius strip $N$ with exceptional fiber $f$ and a horizontal arc $a$}\label{Mob}
\end{figure}

\emph{Construction of Seifert fiber space $M$ with given parameters:}
Let $B^*$ be a compact connected surface of genus $g$ with $m_+ + m_- + (r+1) + t$ boundary components.  $B^*$ is orientable if $\epsilon=o$, $o_1$ or $o_2$ and non-orientable otherwise (i.e. if $\epsilon=n, n_1, n_2, n_3, n_4$). Consider $B^*$ as a disk $D^*$ with disjoint arcs $\sigma_i, \sigma'_i$ on the boundary identified. As $D^*$ is contractible, a circle bundle over $D^*$ is necessarily the trivial bundle $D^* \times S^1$. Any circle bundle $p: M^* \to B^*$ is then obtained from $D^* \times S^1$ by identifying the disjoint annuli $p^{-1}(\sigma_i)$ and $p^{-1}(\sigma'_i)$ fiber preservingly. As any homeomorphism of a circle is isotopic to the identity or the reflection map so we may assume that these annuli are identified by the identity or reflection map along the fibers. This pairwise identification is determined by the symbols for $\epsilon$ and such that $M^*$ ends up with $(r+1) + (t-k) + m_+$ torus boundary components and $k+m_{-}$ Klein bottle boundary components. As both the identity and the reflection map on $S^1$ have a fixed point, so we can identify $B^*$ with a fixed section of this circle bundle. We now obtain $M$ from $M^*$ via the following steps:

\emph{Step 1: }Let $T_i$ denote the torus boundary components of $M^*$. On each such boundary component define the meridian $\mu_i$ as the curve $T_i \cap \del B^*$ and choose a regular boundary fiber of $T_i$ as the longitude $\lambda_i$.  Let $V_i$ be solid tori. Define the meridian on $\del V_i$ as the unique curve (up to isotopy) that bound a disk in $V_i$. By a Dehn filling of $T_i$ by $V_i$ along the slope $q_i/p_i$ we mean the attachment of $V_i$ to $M^*$ via a homeomorphism from $\del V_i$ to $T_i$ that sends the meridian of $V_i$ to the curve $p_i \mu_i + q_i \lambda_i$. Put $(p_{r+1}, q_{r+1})=(1,b)$.  As the first step in our construction, we Dehn fill the first $r+1$ torus boundary components $T_i$ with solid tori $V_i$ along the given slopes $q_i/p_i$. Let $M'$ be the manifold thus obtained. 

\emph{Step 2: }Let $N=I \times I/(x, 0) \sim (1-x, 1)$ be a mobius strip foliated by the circles $(x \times I) \cup ((1-x) \times I)/\sim$ as in Figure \ref{Mob}(i). Let $\phi_i: S^1 \times \del N \to T_i$ be the homeomorphism sending $t \times \del N$ to $\mu_i (t) \times S^1$ in $T_i$. It is helpful to consider the model of $N$ with the boundary on one side as in Figure \ref{Mob}(ii) (which can be obtained from the model in Figure \ref{Mob}(i) by cutting along the fiber $f$, flipping one of the pieces and reattaching along the segment with the double arrows). As a second step in our construction we attach $S^1 \times N$ to the next $(t-k)$ torus boundary components $T_i$ via the attaching map  $\phi_i$. We call this process capping off $T_i$ via $S^1 \times N$.

\emph{Step 3: }In each torus $T_i$ of the remaining $m_+$ torus boundary components let $\{\gamma_j\}_{j=1}^{h_i}$ be $h_i$ many disjoint arcs in $\mu_i$. Let $\psi_{(i,j)}: I \times \del N \to p^{-1}(\gamma_j) \subset T_i$ be the homeomorphism sending $t \times \del N$ to $\gamma_j(t)\times S^1$ in $T_i$. In this step we attach a copy of $(I \times N)$ to each $p^{-1}(\gamma_j) \subset T_i$ via the attaching homeomorphism $\psi_{(i, j)}$. So if $h_i>0$, then the torus boundary $T_i$ of $M'$ is replaced by $h_i$ many Klein bottle boundary components. 

\emph{Step 4} Let $K_i$ denote the Klein bottle boundary components of $M'$. We express $K_i$ as a twisted product $S^1 \widetilde{\times} S^1 = I \times S^1/\sim$ where $(0, z) \sim (1, -z)$ with $B^* \cap K_i$ the curve $\mu_i(t)=(t, e^{\pi i t})$. Let $S^1 \widetilde{\times} N$ denote the twisted product $I \times N/\sim$ where $(0, (x, y))\sim(1, (1-x, 1-y))$ with $N=I \times I/(x, 0) \sim (1-x, 1)$. Let $\phi'_i: S^1 \widetilde{\times} \del N \to K_i$ be the homeomorphism sending $t \times \del N$ to the fiber above $\mu_i(t)$ in $K_i$.  In this step, we cap off the first $k$ Klein bottle boundary components of $M'$ by $S^1 \widetilde{\times} N$ via the attaching map $\phi'_i$.

\emph{Step 5} In each Klein bottle $K_i$ of the remaining $m_-$ Klein bottle boundary components let $\{\gamma'_j\}_{j=1}^{k_i}$ be $k_i$ many disjoint arcs in $\mu_i$. Let $\psi'_{(i, j)}: I \times \del N \to p^{-1}(\gamma'_j) \subset K_i$ be the homeomorphism sending $t \times \del N$ to the fiber above $\gamma'_j(t)$ in $K_i$. In this final step, we attach a copy of $(I \times N)$ to each $p^{-1}(\gamma'_j) \subset K_i$ via the attaching homeomorphism $\psi'_{(i, j)}$ to obtain $M$. If $k_i>0$, then the Klein bottle boundary $K_i$ of $M'$ is replaced by $k_i$ many Klein bottle boundary components.


The manifold $M$ is closed if and only if $m_+ + m_- = 0$ and oriented if and only if $\epsilon =o_1$ or $n_2$, $m_-=t=0$ and $h_i=0$ for all $i=1,...,m_+$. For a closed Seifert fiber space, the Euler number of the fibering is given by $e(M)=\sum_{i=1}^r \frac{q_i}{p_i} + b$.

\begin{figure}
\centering
\def\svgwidth{0.7\columnwidth}
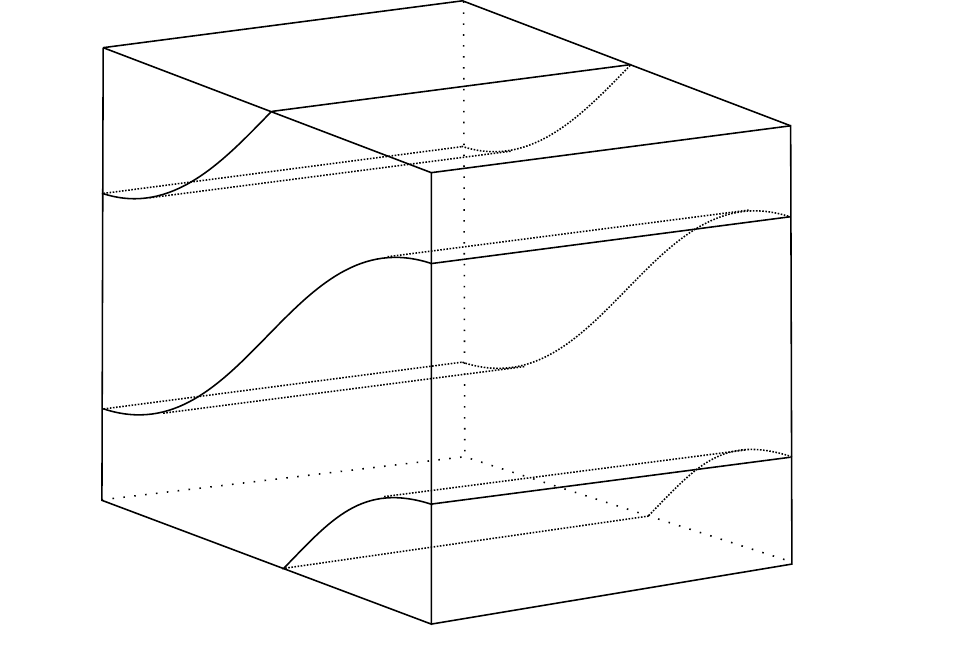
\caption{$M=I \times N$ depicted as a cube $[0,1]\times [0,\frac{1}{2}] \times [0,1]$ with the top and bottom faces $B$ identified and the two halves $C$ of a vertical faces identified. On the annulus $A$ is shown the curves $\Gamma$ and $w(\Gamma)$, and in the interior of the cube is the rectangle $R_\Gamma$.}\label{HorFig}
\end{figure}

Let $N$ be a Mobius strip foliated by circles with one exceptional fiber $f$ as in Figure \ref{Mob}. We give a sufficient condition below on when a set of curves on the boundary of $S^1 \times N$ or $S^1 \widetilde{\times} N$ bound a  horizontal surface. Recall that a horizontal surface is an embedded surface that is transversal to all the fibers of $M$.  Keep Figure \ref{HorFig} as reference for Lemma \ref{extension}.


\begin{lemma}\label{extension}
Let $N=[0,\half] \times S^1/\sim$ with $(\half,z) \sim (\half, -z)$ be a Mobius strip foliated by the circles $t\times S^1$ with one exceptional fiber $\half \times S^1$ as in Figure \ref{Mob}(ii). Let $M = I \times N$ be foliated by the fibers $s \times t \times S^1$ for $s \in I$, $t\in[0,\half]$, with an exceptional annulus $I \times \half \times S^1$. Let $\Gamma$ be a properly embedded arc in $A=I \times \del N = I \times 0 \times S^1$ which intersects each fiber $s \times \del N= s\times 0 \times S^1$ transversely. $\Gamma$ has a parametrisation $\Gamma(s)=(s, 0, \gamma(s))$, for some arc $\gamma: I \to S^1$. Let $\omega: A \to A$ be the map $(s,0, z) \to (s, 0, -z)$. Then there exists a horizontal rectangle $R_\Gamma$ in $M$ such that $R_\Gamma \cap A = \Gamma \cup \omega(\Gamma)$ and for $j=0, 1$, $R_\Gamma \cap (j\times N) =j \times [0,\half] \times (\gamma(j) \cup -\gamma(j))/\sim$. Furthermore if $\Gamma'$ is another properly embedded arc in $A$ disjoint from $(\Gamma \cup \omega(\Gamma))$ then $R_\Gamma$ is disjoint from $R_{\Gamma'}$. 
\end{lemma}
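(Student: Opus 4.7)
The plan is to write $R_\Gamma$ explicitly as the image of a parametrised rectangle and then verify the four required properties by direct inspection. The only delicate point is how the two sheets of the rectangle meet over the exceptional annulus $t=\half$.

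First, I define $\Phi:I\times[0,1]\to M$ by $\Phi(s,u)=(s,u,\gamma(s))$ for $u\in[0,\half]$ and $\Phi(s,u)=(s,1-u,-\gamma(s))$ for $u\in[\half,1]$. The two formulas agree at $u=\half$ because $(s,\half,\gamma(s))$ and $(s,\half,-\gamma(s))$ are identified in $M=I\times N$, so $\Phi$ is well-defined and continuous. I set $R_\Gamma:=\Phi(I\times[0,1])$. Injectivity of $\Phi$ away from the seam $u=\half$ is clear from the coordinate formulas, and on the seam the Mobius identification in $N$ matches exactly the two branches of $\Phi$, so $R_\Gamma$ is an embedded disk. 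For horizontality, I would observe that a regular fiber $s\times t\times S^1$ (with $t<\half$) meets $R_\Gamma$ in precisely the two points $(s,t,\pm\gamma(s))$, and the exceptional fiber $s\times\half\times S^1$ meets it in the single point $(s,\half,\gamma(s))$; in either case the intersection is transverse, as seen from the tangent frame of $R_\Gamma$, which at any interior point is spanned by vectors of the form $\partial_s\pm\gamma'(s)\partial_z$ and $\pm\partial_t$ and hence is complementary to the fiber direction $\partial_z$.

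The boundary conditions follow by direct inspection of $\Phi$: the slice $R_\Gamma\cap A$ corresponds to $t=0$, i.e.\ $u\in\{0,1\}$, giving $\Gamma\cup\omega(\Gamma)$; and $R_\Gamma\cap(j\times N)$ corresponds to $s=j$, producing the prescribed pair of vertical arcs $j\times[0,\half]\times(\gamma(j)\cup-\gamma(j))/\sim$. For the disjointness claim, I note that $\omega$ is an involution preserving $\Gamma\cup\omega(\Gamma)$, so if $\Gamma'$ is disjoint from $\Gamma\cup\omega(\Gamma)$ then $\omega(\Gamma')$ is too. Consequently, for every $s\in I$ the two sets $\{\gamma(s),-\gamma(s)\}$ and $\{\gamma'(s),-\gamma'(s)\}$ are disjoint in the fiber over $(s,0)$, so the rectangles $R_\Gamma$ and $R_{\Gamma'}$ lie in disjoint $S^1$-slices of every fiber of $M$ and are therefore disjoint. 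The only subtle point in the whole argument is the fold at $t=\half$; everything else is a direct coordinate check.
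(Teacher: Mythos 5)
Your proposal is correct and is essentially identical to the paper's proof: the paper defines the same two-branch map $r_\Gamma(s,t)=(s,t,\gamma(s))$ for $t\in[0,\half]$ and $(s,1-t,-\gamma(s))$ for $t\in[\half,1]$, reads off the boundary intersections from the formula, and derives disjointness from $\gamma'(s)\neq\pm\gamma(s)$ exactly as you do. Your extra remarks on well-definedness at the seam $t=\half$ and on transversality to the fibers are sound elaborations of points the paper leaves implicit.
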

\begin{proof}
As the arc $\Gamma$ is transverse to the foliation $s \times 0 \times S^1$ of $A=I \times 0 \times S^1$ so it intersects each fiber exactly once. We can therefore parametrise the arc as $\Gamma(s) = (s, 0, \gamma(s))$ for some arc $\gamma: I \to S^1$. Define the embedding $r_\Gamma: I \times I \to M$ as follows:

$$r_\Gamma(s, t) = \left\{\begin{array}{ll}
(s, t, \gamma(s)) & \mbox{if $t\in[0, \half]$}\\
(s, 1-t, -\gamma(s)) & \mbox{if $t\in [\half, 1]$}
\end{array}\right.$$

Let $R_\Gamma$ be the image of $r_\Gamma$. Then $R_\Gamma \cap A$ is  $r_\Gamma(I \times 0)\cup r_{\Gamma}(I \times 1) = \Gamma \cup \omega(\Gamma)$. And for $j=0, 1$, $R_\Gamma \cap (j \times N) = r_\Gamma(j \times I) = (j \times I \times \gamma(j)) \cup (j \times I \times -\gamma(j))$. If $\Gamma'(s)=(s, 0, \gamma'(s))$ is disjoint from $\Gamma \cup \omega(\Gamma)$ then for any $s\in I$, $\gamma'(s) \neq \pm \gamma(s)$. Hence the images of $r_\Gamma$ and $r_{\Gamma'}$ are disjoint as required.



\end{proof}

\begin{lemma}\label{extension2}
Let $M_1 = S^1 \times N$ and let $M_2 = S^1 \widetilde{\times} N = I \times [0, \half] \times S^1/\sim$ with $(0, t, z) \sim (1, t, \bar{z})$ and $(s, \half, z) \sim (s, \half, -z)$, for $s\in I$, $t\in[0, \half]$, $z\in S^1$. Take their foliation by the circles $s \times t \times S^1$ which has an exceptional torus $S^1 \times \half \times S^1$ in $M_1$ and exceptional Klein bottle $I \times \half \times S^1/\sim$ in $M_2$. For $i=1, 2$, let $\Lambda_i$ be a simple closed curve on $\del M_i$ which intersects each fiber $s \times \del N$ transversely. Let $\omega: \del M_i \to \del M_i$ be the map $(s, z) \to (s, -z)$ (as $-\bar{z}=\overline{(-z)}$ so $\omega$ is well-defined on $\del M_2$). Then there exists a horizontal annulus or horizontal Mobius strip $R_{\Lambda_i}$ in $M_i$ such that $R_{\Lambda_i} \cap \del M_i = \Lambda_i \cup \omega(\Lambda_i)$. Furthermore if $\Lambda'_i$ is another simple closed curve in $\del M_i$ disjoint from  $\Lambda_i \cup \omega(\Lambda_i)$ then $R_{\Lambda_i}$ is disjoint from $R_{\Lambda'_i}$.
\end{lemma}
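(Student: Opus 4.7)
The plan is to reduce each case to Lemma \ref{extension} by cutting the base $S^1$ at a point to turn it into $I$, and then reassembling the rectangles from Lemma \ref{extension} around the loop.

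For $M_1 = S^1 \times N$, pick a point $s_0 \in S^1$ so that the fiber $s_0 \times \del N$ meets $\Lambda_1$ transversely, and view $M_1$ as $I \times N/((0,x)\sim(1,x))$. Because $\Lambda_1$ is transverse to all base fibers, its $s$-coordinate is strictly monotone, so cutting at $s_0$ decomposes $\Lambda_1$ into $p=|\Lambda_1\cap(s_0\times\del N)|$ properly embedded arcs $\Gamma_1,\ldots,\Gamma_p$ in $I\times \del N$, each transverse to the fibers and having one endpoint in $\{0\}\times\del N$ and one in $\{1\}\times\del N$. Their endpoints match cyclically, $\gamma_{j+1}(0)=\gamma_j(1)$, by the identification $(1,x)\sim(0,x)$. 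Apply Lemma \ref{extension} to each $\Gamma_j$ to produce a horizontal rectangle $R_{\Gamma_j}$ whose restriction to $\{1\}\times N$ is the horizontal arc in $N$ from $\gamma_j(1)$ to $-\gamma_j(1)$ through the exceptional fiber. By the endpoint matching, this arc coincides with the restriction of $R_{\Gamma_{j+1}}$ to $\{0\}\times N$, so the rectangles glue consistently into a horizontal embedded surface $R_{\Lambda_1}\subset M_1$ with $R_{\Lambda_1}\cap\del M_1=\bigcup_j(\Gamma_j\cup\omega(\Gamma_j))=\Lambda_1\cup\omega(\Lambda_1)$. A simple count ($p$ rectangles glued along $p$ arcs) gives $\chi(R_{\Lambda_1})=0$, and counting boundary components shows $R_{\Lambda_1}$ is an annulus when $\omega(\Lambda_1)\neq\Lambda_1$ and a M\"obius strip when $\omega(\Lambda_1)=\Lambda_1$.

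For $M_2=S^1\widetilde{\times} N$, the argument is structurally identical but is modulated by the twist $(0,t,z)\sim(1,t,\bar{z})$. First one checks that $\omega(s,z)=(s,-z)$ descends to the Klein bottle $\del M_2$ using $\overline{-z}=-\bar{z}$. Cutting at $s_0$ yields arcs $\Gamma_j$ whose endpoints are now matched via $\gamma_{j+1}(0)=\overline{\gamma_j(1)}$. Under this twisted identification, the horizontal arc in $\{1\}\times N$ from $\gamma_j(1)$ to $-\gamma_j(1)$ is sent to the horizontal arc in $\{0\}\times N$ from $\overline{\gamma_j(1)}$ to $\overline{-\gamma_j(1)}=-\overline{\gamma_j(1)}$, which is precisely the boundary arc of $R_{\Gamma_{j+1}}$ at $s=0$. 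Thus the rectangles again glue into a horizontal surface $R_{\Lambda_2}$, and the same Euler characteristic/boundary component count identifies it as an annulus or M\"obius strip. In both cases, when $\Lambda_i'\cap(\Lambda_i\cup\omega(\Lambda_i))=\emptyset$ the cut-up arcs of $\Lambda_i'$ are disjoint from the cut-up arcs of $\Lambda_i\cup\omega(\Lambda_i)$, so the disjointness clause of Lemma \ref{extension} applied arc-by-arc gives $R_{\Lambda_i}\cap R_{\Lambda_i'}=\emptyset$.

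The main obstacle is the careful bookkeeping in the twisted case $M_2$, where the global complex-conjugation twist $z\mapsto\bar{z}$ must be reconciled with the antipodal identification $z\mapsto -z$ used at the exceptional fiber; the identity $\overline{-z}=-\bar{z}$ is what makes the rectangles glue coherently and makes $\omega$ well-defined on $\del M_2$. A secondary point one must address (or implicitly restrict to) is that $\Lambda_i\cup\omega(\Lambda_i)$ be a disjoint union of simple closed curves, i.e.\ $\Lambda_i$ is either equal to or disjoint from $\omega(\Lambda_i)$, so that the glued surface $R_{\Lambda_i}$ is actually embedded rather than having double points.
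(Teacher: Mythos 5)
Your proof is correct and follows essentially the same route as the paper: both realise $M_i$ as a quotient of $I\times N$, cut $\Lambda_i$ into properly embedded arcs transverse to the fibers, apply Lemma \ref{extension} to each arc, and check that the endpoint identifications (untwisted for $M_1$, conjugated for $M_2$) make the resulting rectangles glue into a horizontal annulus or M\"obius strip, with disjointness inherited arc-by-arc. Your closing observation that one needs $\Lambda_i$ to be either equal to or disjoint from $\omega(\Lambda_i)$ for $R_{\Lambda_i}$ to be embedded is a legitimate point that the paper leaves implicit (it holds in the paper's applications, where the boundary data is $\omega$-invariant).
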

\begin{proof}
Let $M= I \times N= I \times [0, \half] \times S^1/(s, \half, z) \sim (s, \half, -z)$ as in Figure \ref{HorFig}. Let $M_1 = S^1 \times N = I \times [0, \half] \times S^1/\sim$ where $(s, \half, z) \sim (s, \half, -z)$ and $(0, t, z) \sim (1, t, z)$. There exists a quotient map $q: M \to M_1$ such that $q(1, t, z)=q(0, t, z)$. In particular, $q(1, 0, z)=q(0, 0, w)$ if and only if $z=w$.

The preimage $q^{-1}(\Lambda_1)$ of $\Lambda_1$ is a disjoint collection of  properly embedded arcs $\{\Gamma_i\}_{i=0}^k$ in $A=I \times 0 \times S^1$ transverse to the fibers $s \times 0 \times S^1$. There exists a parametrisation $\Gamma_i(s)=(s, 0, \gamma_i(s))$ with $\gamma_i:I \to S^1$. Furthermore, $q(\Gamma_i(1))=q(1, 0, \gamma_i(1))=q(0, 0, \gamma_{i+1}(0))=\Gamma_{i+1}(0)$ (taking $i$ modulo $k+1$). So in particular, $\gamma_i(1)=\gamma_{i+1}(0)$ in $S^1$. And therefore $q(1, t, \gamma_i(1))=q(0, t, \gamma_{i+1}(0))$ ($i$ mod $k+1$).

 By Lemma \ref{extension} there exist horizontal rectangles $R_i$ in $M$ such that $R_i \cap A=\Gamma_i \cup \omega(\Gamma_i)$ and for $j=0, 1$, $R_i \cap (j \times N) = j \times I \times (\gamma_i(j) \cup -\gamma_i(j))$. Let $R_{\Lambda_1} = \cup_{i=0}^k q(R_i)$ in $M_1$. As $q(A) = \del M_1$ and $q(\cup_{i=0}^k (\Gamma_i \cup \omega(\Gamma_i))) = \Lambda_1\cup \omega(\Lambda_1)$ so $R_{\Lambda_1} \cap \del M_1 = \Lambda_1 \cup \omega(\Lambda_1)$. As $q(1, t, \gamma_i(1))=q(0, t, \gamma_{i+1}(0))$, so $q(R_i \cap 1 \times N)=q(R_{i+1} \cap 0 \times N)$ ($i$ mod $k+1$). Hence the edges of $q(R_i)$ that lie on the boundary of $M_1$ match up to give the horizontal annulus $R_{\Lambda_1}$. 
\newline

Similarly, $M_2 = S^1 \widetilde{\times} N = I \times [0, \half] \times S^1/\sim$ where $(s, \half, z) \sim (s, \half, -z)$ and $(0, t, z) \sim (1, t, \bar{z})$. Let $q: M \to M_2$ be the quotient map so that $q(0, t, z) = q(1,t, \bar{z})$. In particular, $q(0, 0, z)=q(1, 0, w)$ if and only if $z=\bar{w}$. Let $q^{-1}(\Lambda_2)$ be a disjoint collection of properly embedded arcs $\{\Gamma_i\}_{i=0}^k$ in $A$ with a parametrisation $\Gamma_i(s)=(s, 0, \gamma_i(s))$. Furthermore, $q(\Gamma_i(1))=q(1, 0, \gamma_i(1))=q(0, 0, \gamma_{i+1}(0))=q(\Gamma_{i+1}(0))$ (taking $i$ modulo $k+1$). So in particular, $\gamma_i(1)=\overline{\gamma_{i+1}(0)}$ in $S^1$. And therefore $q(1, t, \gamma_i(1))=q(0, t, \gamma_{i+1}(0))$ ($i$ mod $k+1$). Finally using Lemma \ref{extension} as above, there exists a horizontal annulus or horizontal Mobius strip $R_{\Lambda_2}$ in $M_2$ such that $\del R_{\Lambda_2}=\Lambda_2 \cup \omega(\Lambda_2)$ (note that $R_{\Lambda_2}$ is a Mobius strip when $\omega(\Lambda_2)=\Lambda_2$). 
\end{proof}

\begin{lemma}\label{wellspaced}
Let $n$ be an even positive number and let $\theta_0=2\pi/n$. Let $w=e^{i\theta_0/2}$ and let $P(n)=\{w e^{mi\theta_0}: m \in \Z\}=\{w, w e^{i\theta_0}, w e^{2i\theta_0}, ... ,w e^{(n-1)i\theta_0}\}$. Let $\rho:S^1 \to S^1$ be the reflection map $z \to \bar{z}$ and let $\omega: S^1 \to S^1$ be the antipodal map $z \to -z$. Then $\rho(P(n))=P(n)$ and $\omega(P(n))=P(n)$.
\end{lemma}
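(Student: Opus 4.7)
The plan is to verify both equalities by direct computation, exploiting the fact that $P(n)$ is closed under multiplication by $e^{i\theta_0}$ (and hence by $e^{i m\theta_0}$ for every integer $m$), since $e^{in\theta_0} = e^{2\pi i} = 1$. In particular, indices on the elements of $P(n)$ may be read modulo $n$.

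First I would handle the antipodal map $\omega$. The evenness of $n$ is used here: $n/2$ is an integer, and $-1 = e^{i\pi} = e^{i(n/2)\theta_0}$. Therefore
$$\omega(w e^{mi\theta_0}) = -w e^{mi\theta_0} = w e^{(m + n/2)i\theta_0},$$
which is again an element of $P(n)$. Since $\omega$ is a bijection of $S^1$ restricting to a self-map of the finite set $P(n)$, we get $\omega(P(n)) = P(n)$.

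For the reflection $\rho$ I would compute
$$\rho(w e^{mi\theta_0}) = \bar w e^{-mi\theta_0} = e^{-i\theta_0/2} e^{-mi\theta_0},$$
and then rewrite $e^{-i\theta_0/2} = e^{i\theta_0/2} e^{-i\theta_0} = w e^{-i\theta_0}$, so that
$$\rho(w e^{mi\theta_0}) = w e^{-(m+1)i\theta_0} \in P(n).$$
The same bijection argument then gives $\rho(P(n)) = P(n)$.

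No step is expected to be an obstacle; the lemma is essentially a book-keeping statement. The only subtlety worth flagging is the role of the hypothesis that $n$ is even, which is precisely what guarantees $-1 \in \{e^{m i \theta_0} : m \in \mathbb{Z}\}$ and hence the $\omega$-invariance of $P(n)$. The shift by $w = e^{i\theta_0/2}$ is what makes $P(n)$ invariant under $\rho$ as well, by ensuring $\bar w$ lies in the same orbit as $w$ under the $\theta_0$-rotation.
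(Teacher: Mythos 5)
Your proof is correct and follows essentially the same route as the paper: both verify $\omega$-invariance via $-1=e^{i(n/2)\theta_0}$ (using that $n$ is even) and $\rho$-invariance by the computation $\rho(we^{mi\theta_0})=we^{-(m+1)i\theta_0}$, which the paper phrases equivalently as $e^{-(m+1/2)i\theta_0}=e^{((-m-1)+1/2)i\theta_0}$. The closing remark that an injective self-map of the finite set $P(n)$ is onto is a slightly more explicit justification of the final equalities than the paper gives, but there is no substantive difference.
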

\begin{proof}
As $e^{ni\theta_0}=1$, so points in $P(n)$ are of the form $we^{mi\theta_0} =e^{(m+1/2)i\theta_0}$ for $m\in \Z$. In particular $\rho(e^{(m+1/2)i\theta_0}) = e^{-(m+1/2)i\theta_0}=e^{((-m-1)+1/2)i\theta_0}$, so $\rho(P(n))=P(n)$. And $m\theta_0 + \pi =m\theta_0 + n\theta_0/2 = (m+n/2)\theta_0$. So $-we^{mi \theta_0} = we^{mi\theta_0 + i\pi}=we^{(m+n/2)i \theta_0} \in P(n)$ as $n$ is even. Therefore $\omega(P(n))=P(n)$.
\end{proof}
\begin{lemma}\label{welltwisted}
Let $0<q<p$ be coprime integers, and let $n=kp$ be an even number. Let $T$ be a torus $S^1 \times S^1\subset \C \times \C$ with meridian $\mu=S^1 \times 1$ and longitude $\lambda=1\times S^1$. Fix a point $z_0 \in S^1$ different from $1$. Let $\alpha:I \to S^1$ and $\beta: I \to S^1$ be the arcs in anti-clockwise direction from $1$ to $z_0$ and from $z_0$ to $1$ respectively. There exists a set of $k$ pairwise disjoint curves $\Lambda$ of slope $q/p$ such that $\Lambda \cap (\beta(s) \times S^1) = \beta(s) \times P(n)$ and $\Lambda \cap (\alpha(s) \times S^1)=\alpha(s) \times e^{2\pi qsi/p}P(n)$.
\end{lemma}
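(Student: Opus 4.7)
My plan is to construct $\Lambda$ explicitly by specifying its intersection with each of the two half-annuli $\beta([0,1])\times S^1$ and $\alpha([0,1])\times S^1$ of $T$ separately, and then checking that the two families of arcs glue up along the shared longitudes $\{1\}\times S^1$ and $\{z_0\}\times S^1$ to form $k$ disjoint simple closed curves of slope $q/p$.

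On the $\beta$-side I would take $\Lambda\cap(\beta([0,1])\times S^1)$ to be the union of the $n=kp$ horizontal arcs $\beta([0,1])\times\{p_i\}$, one per $p_i\in P(n)$. On the $\alpha$-side I would take the union of the $n$ twisting arcs $\{(\alpha(s),\,e^{2\pi qsi/p}p_i):s\in[0,1]\}$, again one per $p_i\in P(n)$. The two intersection conditions in the statement then hold on the nose by construction.

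The key compatibility to verify is at the two gluing circles. At $\alpha(0)=\beta(1)=1$ the matching is automatic, since $e^{0}=1$. At $\alpha(1)=\beta(0)=z_0$ the $\alpha$-endpoints lie in $\{z_0\}\times e^{2\pi qi/p}P(n)$, while the $\beta$-endpoints lie in $\{z_0\}\times P(n)$, so I need the set equality $e^{2\pi qi/p}P(n)=P(n)$. I would check this by writing $e^{2\pi qi/p}=e^{2\pi(kq)i/n}$, which is an $n$-th root of unity, and then using that $P(n)$ is a coset of the group $\mu_n$ of $n$-th roots of unity, so multiplication by any element of $\mu_n$ permutes $P(n)$.

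Once the gluing is verified, $\Lambda$ is a disjoint union of embedded simple closed curves on $T$, and its components are in bijection with the orbits of the permutation $\sigma:P(n)\to P(n)$, $p_i\mapsto e^{2\pi qi/p}p_i$. Since $\gcd(p,q)=1$, the element $e^{2\pi qi/p}$ has order exactly $p$ in $S^1$, so every $\sigma$-orbit has size $p$, giving $n/p=k$ components in total. Along each component the meridian is traversed $p$ times while a longitude twist of $2\pi q$ accumulates, yielding slope $q/p$. The only non-routine ingredient in the argument is the coset-invariance $e^{2\pi qi/p}P(n)=P(n)$ at the $z_0$ gluing; everything else is a direct check, and I do not expect a serious obstacle.
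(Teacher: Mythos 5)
Your construction is exactly the paper's: horizontal arcs $\beta\times P(n)$ glued to twisting arcs $\alpha(s)\times e^{2\pi qsi/p}P(n)$, with the same key check that $e^{2\pi qi/p}=e^{2\pi(kq)i/n}$ permutes $P(n)$. Your verification of the slope and the count of $k$ components via orbits of multiplication by $e^{2\pi qi/p}$ is a slightly more direct bookkeeping than the paper's (which collapses $\beta\times S^1$ to a torus and counts intersections with $\mu$ and $\lambda$), but it is the same argument in substance and is correct.
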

\begin{proof}
Let $A(s)=\alpha(s) \times e^{2\pi q si/p} P(n)$ and let $B(s)=\beta(s) \times P(n)$. Let $\theta_0=2\pi/n$ as in Lemma \ref{wellspaced}. Then $A(0)=1 \times P(n)=B(1)$ and $A(1)=z_0 \times e^{2\pi i q/p}  P(n)=z_0 \times e^{kq i \theta_0} P(n)= z_0 \times P(n) = B(0)$. So $\Lambda=A \cup B$ is a union of pairwise disjoint curves in $T$. 

Both $\lambda$ and $\mu$ intersect $\Lambda$ transversely with the same sign at every intersection, so the slope of a curve in $\Lambda$ is given by taking the ratio $|\Lambda \cap \mu|/|\Lambda \cap \lambda|$. To see that the slope of these curves is $q/p$ we shall collapse $\beta\times S^1$ to $1\times S^1$ so that the annulus $\alpha \times S^1$ becomes a torus $T'$ and $\Lambda$ goes to curves of slope $q/p$ in $T'$.

Let $z_0=e^{i\varphi}$ and let $f:T \to T'$ be the quotient map defined as follows:
$$f(z, w)=\left\{ \begin{array}{ll}
(z^{2\pi/\varphi}, w) & \mbox{if } z \in \alpha \\
(1, w) & \mbox{if } z\in \beta
\end{array}\right.
$$
As $f(A(0))=f(A(1))=1 \times P(n)$ so $f(A)$ is a pairwise disjoint set of curves in $T'$ parametrised by $s\to (e^{i(2\pi s)}, e^{i(2\pi s)q/p}P(n))$ for $s\in I$. Their lifts in $\R^2$ are parallel straight lines with slope $q/p$, so the curves in $f(A)$ have slope $q/p$ as required. Let $\lambda'=f(\lambda)$ and let $\mu'=f(\mu)$. Then the slope of $f(\Lambda)=f(A)$ is given by $|f(\Lambda) \cap \mu'|/|f(\Lambda) \cap \lambda'| = q/p$. 

Note that $f$ takes both $\lambda=1 \times S^1$ and $z_0 \times S^1$ to $1\times S^1$ homeomorphically. Furthermore, $\Lambda \cap \lambda = \Lambda \cap (z_0 \times S^1) = P(n)$. So $|f(\Lambda) \cap f(\lambda)|=|f(\Lambda \cap \lambda)|=|\Lambda \cap \lambda|$. As $\mu$ is disjoint from $B$, so $\Lambda \cap \mu = A \cap \mu \subset \alpha(0,1) \times S^1$. As $f$ restricted to $\alpha(0,1) \times S^1$ is a homeomorphism onto its image and $f(\Lambda) = f(A)$ so $|f(\Lambda) \cap f(\mu)|=|f(A) \cap f(\alpha(0, 1) \times 0)|=|A \cap (\alpha(0, 1) \times 0)|=|A \cap \mu|=|\Lambda \cap \mu|$. Therefore the slope of curves in $\Lambda$, is $|\Lambda \cap \mu|/|\Lambda \cap \lambda| = q/p$ as required. Also as $|\Lambda \cap \lambda|=|P(n)|=n$ so there are $n/p=k$ curves in $\Lambda$ as required.




\end{proof}

The criteria for existence of horizontal surfaces in orientable Seifert fiber spaces is well known.  We extend this criteria to all Seifert fiber spaces.

\begin{proof}[Proof of Theorem \ref{SFSThm}]
To construct the manifold $M$ we proceed as explained in the construction of Seifert fiber spaces with the given parameters at the beginning of this section. Let $D$ be a $2$-disk and let \{$\sigma_i$, $\sigma'_i\}$ be a collection of pairwise disjoint embedded arcs in $\del D$. Let $\phi_i: \sigma_i(I) \to \sigma'_i(I)$ be a homeomorphism that is either $\sigma_i(s) \to \sigma'_i(s)$ for all $s\in I$ or $\sigma_i(s) \to \sigma'_i(1-s)$ for all $s\in I$. Let $\psi_i :S^1 \to S^1$ be either the  identity map $z \to z$  or the conjugation map $z \to \bar{z}$ for all $z\in S^1 \subset \C$. The number of such arcs $\sigma_i, \sigma'_i$ and the choice of $\phi_i$ and $\psi_i$ is determined by the parameters. See Section 2 of \cite{CMMN} for details.

Let $D \times S^1$ be a solid torus foliated by the circle leaves $x \times S^1$. Let $M^*=D \times S^1/\sim$ where $(\sigma_i(s), z) \sim (\phi_i(\sigma_i(s)), \psi_i(z))$, i.e., $M^*$ is obtained from the solid torus $D \times S^1$ by identifying the annuli $\sigma_i(I) \times S^1$ with $\sigma'_i(I) \times S^1$ via the maps $\phi_i \times \psi_i$. As $\phi_i$ and $\psi_i$ send leaves of $D \times S^1$ to leaves of $D \times S^1$ so they induce a foliation of $M^*$ by circle leaves. As $\psi_i(1)=1$ for all $i$, so let $B^* = D \times 1/\sim$ be the surface obtained by identifying the arcs $\sigma_i\times 1$ with $\sigma'_i\times 1$ via the map $\phi_i$. By construction, $B^*$ intersects each leaf exactly once. Let $f: M^*\to B^*$ be the projection map which collapses each circle leaf of $M^*$ to a point. This gives a circle bundle structure on $M^*$.

The manifold $M$ is now obtained from the circle bundle $M^*$ as follows: First Dehn fill $r+1$ torus boundary components $T_i$ to obtain the manifold $M'$, as described in Step 1 of the construction. Then cap off some torus boundary components of $M'$ by $S^1 \times N$ and some Klein bottle boundary components of $M'$ by $S^1 \widetilde{\times} N$ as explained in Step 2 and Step 4 of the construction. And lastly attach copies of $I \times N$ along its boundary to disjoint fibered annuli $I \times S^1$ in some of the boundary components of $M'$, as detailed in Steps 3 and 5 of the construction.\newline \newline
\noindent \emph{Case I: $\del M \neq \emptyset$.}
Let $(p_{r+1}, q_{r+1})=(1,b)$ and let $n=2p_1...p_{r+1}$. Our aim is to construct a horizontal surface $S$ which intersects each regular fiber of $M$ $n$ times. 


\emph{Constructing horizontal surface $S^*$ in $M^*$: }
As $\del M \neq \emptyset$, there exists an arc $d$ in $B^* \cap \del M$. Let $P(n)$ be the set of points $e^{i (m+ 1/2) 2\pi/n}$ in $S^1$ for $m\in \Z$, as in Lemma \ref{wellspaced}. Let $\mathcal{D} = D \times P(n) \subset D \times S^1$. By Lemma \ref{wellspaced}, $\psi_i(P(n))=P(n)$ for all $i$, so the arcs $\sigma_i \times P(n) \subset \del \mathcal{D}$ are identified with the arcs $\sigma'_i \times P(n) \subset \del \mathcal{D}$ to give a horizontal surface in $M^*$ that we denote by $S^*$. The map $f: S^*\to B^*$ is an $n$-to-$1$ covering projection. 

\begin{figure}
\centering
\def\svgwidth{0.6\columnwidth}
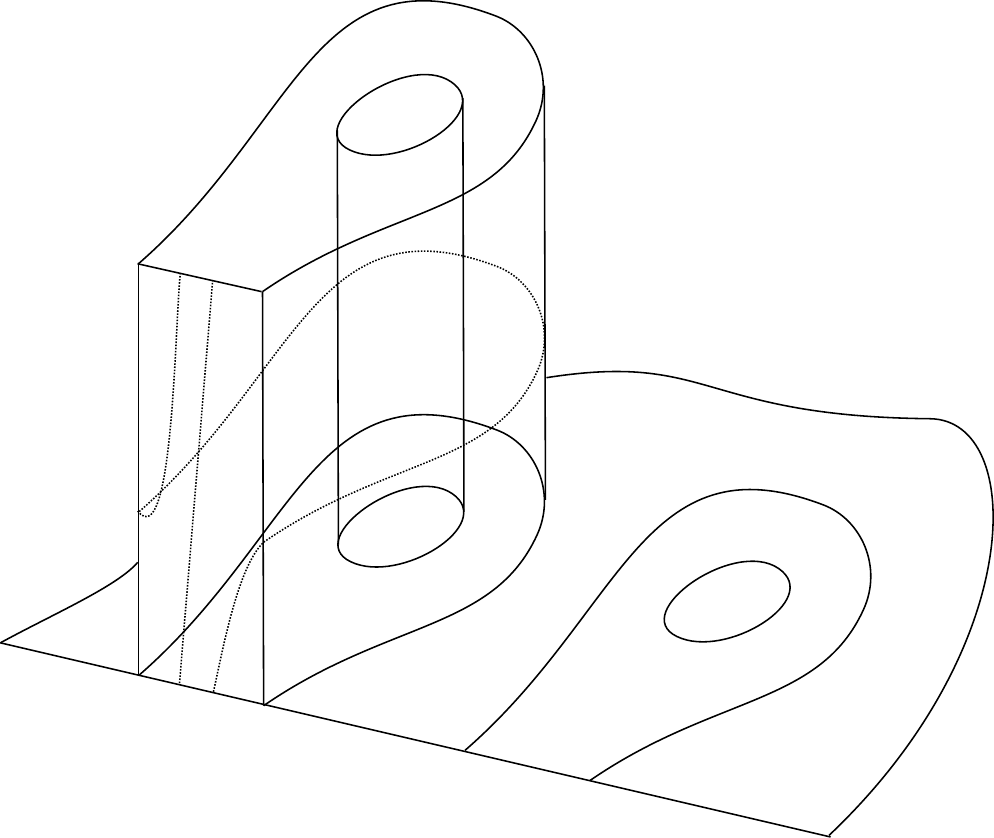
\caption{A piece of the section $B^*$ in $M^*$ where $T_1$ is the torus boundary above $c_1$, $T'_1$ is the parallel torus  above $\alpha_1 \cup \beta_1$ and the dotted curve on $T'$ represents the surgery slope $\frac{1}{2}$.}\label{Dehnfill}
\end{figure}

\emph{Constructing horizontal surface $S'$ in $M'$: }
Let $c_1, ..., c_{r+1}$ be the $r+1$ boundary components of $B^*$ whose preimages $T_j=f^{-1}(c_j)$ are tori which are Dehn filled by solid tori $V_j$. Let $\beta_1, ..., \beta_{r+1}$ be disjoint arcs in $B^*$ with both end points on $d$ which cut out from $d$ disjoint arcs $\alpha_j$. Furthermore, they cut out from $B^*$ annuli with disjoint interiors and with boundary curves $\alpha_j \cup \beta_j$ and $c_j$. Let $B_0$ be the component of $B^*$ outside of all these annuli (i.e, $B_0$ is disjoint from all the $c_j$). Let $M_0=p^{-1}(B_0)$ be its pre-image in $M^*$. Let $T'_j=f^{-1}(\alpha_j \cup \beta_j)$ be a torus parallel to $T_j=f^{-1}(c_j)$. As in Lemma \ref{welltwisted} let $\Lambda_j$ be a set of $n/p_j$ pairwise disjoint curves of slope $q_j/p_j$ in $T'_i$ such that $\Lambda_j\cap f^{-1}(\beta_j)=\beta_j \times P(n)$ and $\Lambda_j \cap f^{-1}(\alpha_j(s))=\alpha_j(s) \times e^{2\pi i s (q_j /p_j)} P(n)$. In Step 1 of our construction, we obtained $M'$ by Dehn filling $M^*$ along the slopes $q_j/p_j$ of $T_j$. We can instead construct $M'$ by attaching the meridians of the solid tori $V_j$  to $T'_j$ along the curves in $\Lambda_j$. See Figure \ref{Dehnfill}.

Let $S_0 = M_0 \cap S^*$ be a horizontal surface in $M_0$ which intersects each fiber $f^{-1}(b)$ of the circle bundle $f: M_0 \to B_0$ in $b \times P(n)$. In particular, $S_0$ intersects each annulus $f^{-1}(\beta_i)$ in $\beta_i \times P(n)$.


Let $\mathcal{D}_i$ be a union of $n/p_j$ disjoint meridian disks in the solid torus $V_j$. To construct $M'$ from $M_0$ we attach $V_j$ to $M_0$ along $f^{-1}(\beta_j)$ by homeomorphisms $h_j$ from $\del V_j$ to $T'_j$ that send $\del \mathcal{D}_j$ to $\Lambda_j$. As $S_0 \cap f^{-1}(\beta_j) = \beta_j \times P(n)=h_j(\mathcal{D}_j) \cap f^{-1}(\beta_j)$ so $S'=S_0 \cup h_j(\mathcal{D}_j)$ is a horizontal surface in $M'$. Let $F$ be a boundary component of $M'$ and let $\eta$ be a fiber of $F$. If $\eta$ is a fiber over a point in some $\alpha_j$ then $S' \cap \eta = e^{i\theta} P(n)$ for some angle $\theta$. By Lemma \ref{wellspaced}, $\omega(e^{i\theta} P(n))=e^{i\theta} \omega(P(n))=e^{i\theta} P(n)$. If $\eta$ is not a fiber over a point in some $\alpha_j$, then $\eta \cap S'=P(n)$. So in either case $\omega(S' \cap \eta) = S' \cap \eta$.

\emph{Extending $S'$ to a horizontal surface $S$ in $M$:}
Let $F$ be a boundary component of $M'$ that is capped off by $S^1 \times N$ (or $S^1 \widetilde{\times} N$) as in Step 2 (or Step 4) of the construction of Seifert fiber spaces with given parameters. Let $\Gamma$ be the disjoint union of curves $\Gamma=S' \cap F$. For each fiber $\eta$ of $F$, $\omega(\Gamma \cap \eta)=\Gamma \cap \eta$ and so by Lemma \ref{extension2}, there exists a horizontal surface $\mathcal{A}_F$ in $S^1 \times N$ (or in $S^1 \widetilde{\times} N$) such that $\mathcal{A}_F \cap F = \Gamma$.

Let $\gamma$ be an arc in $\del B^*$ disjoint from all the $\alpha_i$ and $c_i$. Assume that we need to attach a copy of $I \times \del N$ along $f^{-1}(\gamma)$ as in Step 3 or Step 5 of the construction of Seifert fiber spaces with given parameters. Then $S' \cap f^{-1}(\gamma) = \gamma \times P(n)$. By Lemma \ref{extension}, there exists a horizontal surface $\mathcal{A}_\gamma$ in $I \times N$ such that $\mathcal{A}_\gamma \cap f^{-1}(\gamma) = \gamma \times P(n)$.

We can therefore extend the surface $S'$ to $S= S' \cup_F \mathcal{A}_F \cup_\gamma \mathcal{A}_{\gamma}$ where $F$ varies over all boundary surfaces of $M'$ which are capped off by an $S^1 \times N$ or $S^1 \widetilde{\times} N$ and $\gamma$ varies over all arcs in $\del B^*$ such that an $I \times N$ is attached to $f^{-1}(\gamma)$. By construction if $\eta$ is a fiber of $\del M$ then $\omega(S\cap \eta) = S \cap \eta$.\\

\noindent \emph{Case II: $\del M=\emptyset$ and $SE(M) \neq \emptyset$.}
If $SE(M)$ has an annulus then $\del M \neq \emptyset$, as such annuli can only be obtained as the exceptional set of an $I \times N$ attached to $\del M'$. So we may assume that $SE(M)$ only has torus and Klein bottle components. These are obtained as the exceptional sets of $S^1 \times N$ or $S^1 \widetilde{\times} N$ attached to $M'$ along boundary components. 

Assume that $SE(M)$ has a torus exceptional set obtained by attaching $P = S^1 \times N$ along a torus boundary component $T$ of $M'$. Let $W=M \setminus int(P)$. As $W$ is a Seifert fiber space with boundary $T$ so by Case I, $W$ contains a horizontal surface $S_W$. Furthermore for each fiber $\eta$ of $T$, $\omega(S_W \cap \eta)=S_W \cap \eta$. So by Lemma \ref{extension2}, there exists a horizontal surface $\mathcal{A}_P$ in $P$ such that $\mathcal{A}_P \cap T = S_W \cap T$. Suppose $SE(M)$ has a Klein bottle exceptional set which lies in $Q=S^1 \widetilde{\times} N$ attached to a Klein bottle boundary component $K$ of $M'$. Proceeding similarly, we  get a horizontal surface $\mathcal{A}_Q$ in $Q$ such that $\mathcal{A}_Q \cap K = S_W \cap K$. Therefore either $S=S_W \cup_T \mathcal{A}_P$ or $S=S_W \cup_K \mathcal{A}_Q$ is the required  horizontal surface in $M$.\\

\emph{Case III: Suppose $\del M = \emptyset$ and $SE(M)=\emptyset$.}
The proof here is identical to the closed orientable case (see Pg 26-27 of \cite{Hat}). We reproduce here the details for completion. Remove a solid torus neighbourhood $V$ of a regular fiber of $M$ to get a manifold $W$ with a torus boundary component $T$.  Proceed as in Case I, to obtain a horizontal surface $S_W$. It is now enough to show that $S_W$ intersects $T$ in curves of slope $e(M)=\sum_{i=1}^{r+1} q_i/p_i$: If $e(M)=0$, we can extend the horizontal surface $S_W$ to a horizontal surface on all of $M$ by attaching meridian disks of the solid torus $V$ that we Dehn fill in at $T$ with slope zero. Conversely, given a horizontal surface $S$ in $M$, the intersection of $S$ with $T$ bounds disks in $V$ and hence must have slope zero, so $e(M)=0$.

\emph{Claim: Slope of $S \cap T$ is $e(M)$.} As $S$ is horizontal it meets each fiber of $M_0$ the same number of times, say $n$ times. Intersections of $S$ with $B_0$ on the boundary we count with sign according to whether the slope of $\del S$ at such an intersection point is positive or negative. The signed total number of intersections we get is zero as points at the end of an arc of $S \cap B_0$ have opposite sign. The slope of $S$ on the torus boundary containing $c_i$ is by definition the ratio of the signed intersection with $\del B_0$ and the signed intersection with a regular fiber. As this slope is $q_i/p_i$, it gives the signed intersection of $S$ with $B_0$ on $c_i$ as $n (q_i/p_i)$ for $i=1...(r+1)$. So the slope of $S\cap T$ is $(\sum_{i=1}^{r+1} n q_i/p_i )/n = \sum_{i=1}^r q_i/p_i + b = e(M)$.

\end{proof}

The below Corollary \ref{SFSCor} now follows from standard arguments for horizontal surfaces (see the discussion on Pg 17-18 of \cite{Hat}):

\begin{corollary}\label{SFSCor}
Let $M$ be a compact $3$-manifold and let $F$ be a compact $2$-sided surface properly embedded in $M$. The following are equivalent:
\begin{enumerate}
{\item{$M$ is a Seifert fiber space and $F$ is a horizontal surface in $M$ that intersects each regular fiber of $M$ $n$ times.}
\item{At least one of the following is true:
	\begin{enumerate}
	{\item{There exists a homeomorphism $\phi$ of $F$ such that $M = F \times I/\sim$, where $(x, 1) \sim (\phi(x), 0)$ for all $x\in F$. Furthermore $\phi^n=id$.}
	\item{There exist homeomorphisms $\psi_0$ and $\psi_1$ of $F$ such that $M=F \times I/\sim$, where $(x, 0) \sim (\psi_0(x), 0)$ and $(x, 1) \sim (\psi_1(x), 1)$. Furthermore, $n$ is even, $(\psi_0 \psi_1)^{n/2}=id$ and both $\psi_0$ and $\psi_1$ are fixed-point free involutions.}
	}\end{enumerate}
}
}\end{enumerate}
\end{corollary}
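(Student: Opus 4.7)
My plan is to cut $M$ along $F$, analyse the resulting manifold $M|F$ as an $I$-bundle using the circle foliation, and reconstruct $M$ by re-gluing. For the forward direction (1)$\Rightarrow$(2), since $F$ is transverse to the circle fibers of $M$ and each regular fiber meets $F$ in $n$ points, the fibers restrict to arcs on $M|F$, endowing $M|F$ with the structure of an $I$-bundle whose fiber-arcs meet $F$ at their endpoints. The 2-sided surface $F$ contributes two disjoint copies $F_0, F_1$ to $\partial(M|F)$.

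Case (a) arises when $M|F$ is connected: both $F_0$ and $F_1$ lie in $\partial(M|F)$ as the two section-boundaries of the $I$-bundle, forcing $M|F \cong F\times I$. Reconstructing $M$ then amounts to identifying $F_0$ with $F_1$ by some homeomorphism $\phi$, yielding $M = F\times I/(x,1)\sim(\phi(x),0)$. Following a single circle fiber through the mapping torus shows it crosses $F$ exactly $n$ times before closing up, which forces $\phi^n=\mathrm{id}$.

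Case (b) arises when $M|F$ has two components $M_0, M_1$: each $M_j$ is an $I$-bundle containing only one copy of $F$ in its section-boundary, so each must be a twisted $I$-bundle with $F$ as the connected double cover of its base $F/\psi_j$ for some involution $\psi_j:F\to F$. The $\psi_j$ are fixed-point free since otherwise the quotient would fail to be a manifold (equivalently, would produce half-length fibers in $M$). Reconstructing gives $M=F\times I/{\sim}$ with $(x,0)\sim(\psi_0 x,0)$ and $(x,1)\sim(\psi_1 x,1)$, and tracing a circle fiber through its zigzag --- up segment, swap endpoints by $\psi_1$, down segment, swap by $\psi_0$, repeat --- shows the fiber must use an even number of segments to close, giving $n$ even and $(\psi_0\psi_1)^{n/2}=\mathrm{id}$.

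For the converse (2)$\Rightarrow$(1), in case (a) the translation $(x,t)\mapsto(x,t+s)$ on the infinite cyclic cover $F\times\R$ descends to a circle action on $M$ whose orbits form the Seifert fibers, regular when the $\phi$-orbit has length $n$ and exceptional when it is a proper divisor; standard local models confirm the fibered-solid-torus neighbourhood structure. In case (b), the images of the vertical lines $\{x\}\times I$ zigzag through $M$ and close after exactly $n$ arcs thanks to $(\psi_0\psi_1)^{n/2}=\mathrm{id}$; at points of $F\times\{0\}$ and $F\times\{1\}$ the local model is a fibered solid (half) Klein bottle, confirming that $M$ is a Seifert fiber space with $F$ horizontal and meeting each regular fiber in $n$ points. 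The main obstacle is Case (b) of (1)$\Rightarrow$(2): one must pin down each component of $M|F$ as a twisted (not trivial) $I$-bundle, extract the involutions $\psi_j$ as the deck transformations of the resulting double covers, and verify the compatibility relation $(\psi_0\psi_1)^{n/2}=\mathrm{id}$ coherently from the way the circle fibers of $M$ zigzag across $F$.
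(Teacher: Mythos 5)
Your proposal follows essentially the same route as the paper's proof: cut $M$ along $F$, use transversality to the circle fibers to give the cut-open manifold an $I$-bundle structure, split into the connected case (a product $F\times I$, reglued by a monodromy $\phi$ with $\phi^n=id$ read off by tracing a fiber) and the disconnected case (two twisted $I$-bundles whose deck involutions are the fixed-point-free $\psi_0,\psi_1$ with $(\psi_0\psi_1)^{n/2}=id$), and for the converse rebuild the circle foliation from $\phi$ or the $\psi_i$. The only real difference is at the end of the converse: where you appeal to ``standard local models'' to certify the fibered solid torus neighbourhoods, the paper instead only verifies that the leaves form a foliation by circles and then quotes Epstein's theorem, which avoids having to know that a periodic surface homeomorphism is locally conjugate to a rotation near its periodic points.
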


\begin{proof}
Let $M$ be a Seifert fiber space and let $F$ be an embedded $2$-sided horizontal surface in $M$ that intersects each regular fiber $n$ times. 
Each fiber of $M$ has a fibered neighbourhood fiber-preserving homeomorphic to the fibered solid torus $D \times I/\sim$ where $(x, 1) \sim(h(x), 0)$ for some homeomorphism $h$ of $D$. So $M\setminus F$ is an $I$-bundle. Let $p:M\setminus F \to G$ be the $I$-bundle projection, with $G$ the base surface. As $F$ is $2$-sided, the associated $\del I$-subbundle is two copies of $F$. Let $N(F)$ denote the tubular neighbourhood of $F$. Then $p: \del (M\setminus N(F)) = F \sqcup F \to G$ is a $2$-sheeted covering projection.

If $M\setminus F$ is connected then $G$ is connected and the covering map $p: F \cup F \to G$ is the identity on each copy of $F$. So $M\setminus F= F\times I$ and hence $M=F \times I/\phi$ for some homeomorphism $\phi: F\to F$. Let $x\in F$ and let $\eta_x$ be the fiber above $x$. As $F\times \frac{1}{2}$ intersects $\eta_x$ $n$ times, so $\eta_x$ is divided by $F$ into $n$ segments with end points $(\phi^i(x), \frac{1}{2})$ and $(\phi^{i+1}(x), \frac{1}{2})$. So in particular, $\phi^n(x)=x$ as required.

If $M\setminus F$ is disconnected then each of the two components of $M\setminus F$ are $I$-bundles with a copy of $F$ as the associated $\del I$-subbundle. The base surface $G$ has two components $G_0$ and $G_1$ and the projection map restricted to each copy of $F$ is a $2$-sheeted cover $p_i: F \to G_i$ for $i=0,1$. Let $\psi_i: F\times i \to F\times i$ be the non-trivial deck transformation corresponding to $p_i$. As the group of deck transformations is $\Z_2$ so $\psi_i^2=id$. As $F$ is $2$-sided, by thickening $F$ to $F \times I$ and collapsing the two $I$ bundles along the fibers, we get $M= F \times I/\psi_i$ as required. Let $x\in F$ and let $\eta_x$ be the fiber above $x$. As before, $F\times \frac{1}{2}$ intersects $\eta_x$ $n$ times so $\eta_x$ is divided by $F\times \frac{1}{2}$ into $n$ segments along the points $(x, \frac{1}{2})$, $(\psi_1(x),\frac{1}{2})$, $(\psi_0 \psi_1(x),\frac{1}{2})$, $(\psi_1 \psi_0 \psi_1(x), \frac{1}{2})$, ...., $(\psi_1 (\psi_0 \psi_1)^{(n/2)-1}(x), \frac{1}{2})$. In particular, $n$ is even and $(\psi_0 \psi_1)^{n/2}(x)=x$ as required. As $\psi_i$ is a non-trivial deck transformation so it is fixed-point free.

Conversely, let $M=F \times I/\phi$ be a surface bundle with periodic monodromy $\phi$ of period $n$. $F \times I$ is foliated by the leaves $x \times I$ for $x\in F$. For each $x\in F$, let $U_x$ be a neighbourhood of $x$ in $F$ homeomorphic to $\R^2$. Then $U_x \times [0, \frac{1}{2}) \cup (\phi(U_x) \times (\frac{1}{2}, 1])$ is a fibered neighbourhood fiber-wise homeomorphic to $\R^2 \times \R$ with the leaves $x \times \R$. Therefore, the leaves $x \times I/\phi$ give a foliation of $M$ with $1$-dimensional leaves. As $\phi^{n}(x)=x$, so each leaf $\cup_{i=0}^\infty (\phi^i(x) \times I)$ is a circle.

Similarly, assume $M = F \times I/\psi_i$ with $\psi_i$ fixed-point free, $\psi_i^2=id$ and $(\psi_0 \psi_1)^{n/2}=id$. As $\psi_i$ is fixed-point free, for each point $(x, i) \in F \times i$, there exists a neighbourhood $U_{(x, i)} \subset F$ homeomorphic to $\R^2$ such that $\psi_i(U_{(x, i)})\cap U_{(x, i)} = \emptyset$. If this were not true we would obtain a sequence of points $x_n \to x$ in $F$ such that $\psi_i(x_n) \to x$. As $\psi_i$ is an involution so it follows that $x_n \to \psi_i(x)$ and hence $x = \psi_i(x)$. $U_{(x,1)} \times (0, 1] \cup \psi_1(U_{(x, 1)}) \times (0, 1]$ is then a fibered neighbourhood fiber-wise homeomorphic to $\R^2 \times \R$ (with the leaves $x \times \R$). And similarly $U_{(x,0)} \times [0, 1) \cup \psi_0(U_{(x, 0)}) \times [0, 1)$ is a fibered neighbourhood fiber-wise homeomorphic to $\R^2 \times \R$ with the leaves $x \times \R$. $M$ is therefore foliated by the circle leaves $(x \times I) \cup (\psi_1(x) \times I) \cup (\psi_0 \psi_1(x) \times I) \cup... \cup (\psi_1(\psi_0\psi_1)^{(n/2)-1} \times I)$.
 
We now use a result of Epstein\cite{Eps} which says that that any compact 3-manifold foliated by circles is a Seifert fiber space to conclude that $M$ is Seifert fibered. Furthermore, it contains the $2$-sided horizontal surface $F \times \frac{1}{2}$ which intersects each regular fiber $n$ times. By Theorem \ref{SFSThm}, it must have $\del M \neq \emptyset$ or $SE(M) \neq \emptyset$ or be closed with $e(M)=0$.
\end{proof}

\section{Prism complexes}
Let $F$ be a surface with a Riemannian metric $g$. We begin this section with an overview of the existence and uniqueness of a Riemannian center of mass for small enough convex geodesic polyhedra in $(F, g)$. The Euclidean center of mass of points $p_1, ..., p_k \in \R^n$ is the point $\frac{1}{k}\sum p_i$. The Riemannian center of mass, also known as the Karcher mean, is a generalisation of this affine notion and was extensively studied by Karcher\cite{Kar}. We present here the treatment as in \cite{DyeVegWin}.

\begin{definition}
For $x\in F$, let $B(x, r)$ denote the set of points of $F$ at a distance less than $r$ from $x$, and denote by $\overline{B(x,r)}$ its closure. The injectivity radius of $M$ at a point $x\in M$ is the supremum of the radii $r$ of Euclidean balls $B(0, r) \subset T_x(M)$ that project down diffeomorphically to balls $B(x, r)$ in $M$ via the exponential map $exp_x$. The injectivity radius of $M$, denoted by $i(M)$, is the infimum of the injectivity radius at all points of $M$.
\end{definition}

We call a set $B \subset M$ convex if any two points $p, q \in B$ are connected by a minimising geodesic that is unique in $M$ and which lies entirely in $B$.

\begin{lemma}[Theorem IX.6.1 of \cite{Cha}]
Let $M$ be a Riemannian manifold with sectional curvatures bounded above by $K_+$ and let $i(M)$ be its injectivity radius. If 
$$r < \min\left\{\frac{i(M)}{2}, \frac{\pi}{2\sqrt{K_+}}\right\}$$
then $\overline{B(x, r)}$ is a convex set. (If $K_+ \leq 0$ then we take $1/\sqrt{K_+}$ to be infinite.)
\end{lemma}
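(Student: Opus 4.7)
The plan is to establish the two defining properties of convexity separately: first, that any two points $p,q\in \overline{B(x,r)}$ are joined by a unique minimising geodesic in $M$; second, that this geodesic lies entirely in $\overline{B(x,r)}$. For the first property, the triangle inequality gives $d(p,q)\le d(p,x)+d(x,q)<2r<i(M)$, so $q$ lies in the injectivity neighbourhood of $p$. Hence $\exp_p$ is a diffeomorphism on a ball of radius greater than $d(p,q)$, which produces a unique minimising geodesic $\gamma:[0,1]\to M$ from $p$ to $q$.

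For the second property, I would study the function $f(t)=\tfrac{1}{2}d(x,\gamma(t))^2$ and show that it is convex on $[0,1]$, which forces $f(t)\le \max\{f(0),f(1)\}\le r^2/2$. The key input is a Hessian comparison: on the geodesic ball $B(x,R)$ with $R<\pi/(2\sqrt{K_+})$, the squared distance function $y\mapsto \tfrac{1}{2}d(x,y)^2$ has Hessian bounded below by the model Hessian in the space form of constant curvature $K_+$, namely by $\sqrt{K_+}\, d(x,y)\cot(\sqrt{K_+}\,d(x,y))$ times the metric in directions perpendicular to $\nabla d$, and by $1$ in the radial direction. The restriction $r<\pi/(2\sqrt{K_+})$ keeps the argument of $\cot$ strictly less than $\pi/2$, so this factor is positive, and the Hessian is positive definite throughout $\overline{B(x,r)}$. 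The bound $r<i(M)/2$ guarantees that the entire geodesic $\gamma$ stays inside the injectivity neighbourhood of $x$, so that $d(x,\cdot)$ is smooth along $\gamma$ and the Hessian comparison applies pointwise.

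I would then compute $(f\circ\gamma)''(t)=\mathrm{Hess}(f)(\dot\gamma,\dot\gamma)\ge 0$, concluding that $f\circ\gamma$ is convex on $[0,1]$. Since $f(0),f(1)\le r^2/2$, convexity yields $f(t)\le r^2/2$, i.e.\ $\gamma(t)\in \overline{B(x,r)}$ for all $t$. Combined with the uniqueness of $\gamma$ in $M$ established in the first step, this gives convexity of $\overline{B(x,r)}$.

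The main obstacle is the Hessian comparison estimate; once that is available in the form above, the convexity argument is a one-line consequence. One can derive the estimate by comparing Jacobi fields along radial geodesics emanating from $x$ with those in the model space of constant sectional curvature $K_+$ via the Rauch comparison theorem, using the hypothesis $r<\pi/(2\sqrt{K_+})$ to ensure no focal points occur before the comparison breaks down. The case $K_+\le 0$ is easier since the model Hessian is then bounded below by the Euclidean/hyperbolic one, and the obstruction coming from $\pi/(2\sqrt{K_+})$ disappears.
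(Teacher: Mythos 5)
The paper offers no proof of this lemma --- it is quoted verbatim from Chavel (Theorem IX.6.1 of \cite{Cha}) --- so your attempt can only be judged on its own terms. Your outline follows the standard route: uniqueness of the minimising geodesic from $d(p,q)\le 2r<i(M)$, then convexity of $f(t)=\tfrac12 d(x,\gamma(t))^2$ along $\gamma$ via Hessian comparison. The first half is fine, but the second half has a genuine gap. You establish positivity of $\mathrm{Hess}\bigl(\tfrac12 d_x^2\bigr)$ only on $\overline{B(x,r)}$, yet you then apply it at the points of $\gamma$, which a priori are only known to satisfy $d(x,\gamma(t))\le r+\tfrac12 d(p,q)\le 2r$. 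Since the hypothesis gives only $r<\pi/(2\sqrt{K_+})$, the geodesic could in principle wander out to distance nearly $\pi/\sqrt{K_+}$ from $x$, where $\sqrt{K_+}\,d\cot(\sqrt{K_+}\,d)$ is negative and the comparison no longer yields $(f\circ\gamma)''\ge 0$. Asserting the Hessian is positive along $\gamma$ therefore presupposes $\gamma\subset\overline{B(x,r)}$, which is exactly what is being proved. (The bound $r<i(M)/2$ does keep $\gamma$ inside the region where $d_x$ is smooth, but smoothness and positivity are governed by different radii --- $i(M)$ versus $\pi/(2\sqrt{K_+})$ --- and you only have the factor-of-two margin for the first.)

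As written, your argument correctly proves the weaker statement with $\pi/(4\sqrt{K_+})$ in place of $\pi/(2\sqrt{K_+})$, since then $2r<\pi/(2\sqrt{K_+})$ and the Hessian is positive along all of $\gamma$; incidentally, that weaker radius is all this paper ever uses (see Lemma~\ref{center}). To obtain the stated constant --- which is sharp, e.g.\ for round spheres --- you need an additional continuity argument. The standard one: slide the endpoints radially towards $x$, letting $p_u,q_u$ be the points at parameter $u\in[0,1]$ along the minimising geodesics from $x$ to $p$ and $q$, let $\gamma_u$ be the minimising geodesic from $p_u$ to $q_u$ (unique since $d(p_u,q_u)<i(M)$, and depending continuously on $u$), and set $h(u)=\max_t d(x,\gamma_u(t))$. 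Then $h$ is continuous with $h(0)=0$, and $h$ cannot cross the value $r$: if $u^*=\inf\{u:h(u)>r\}$, then $h(u^*)\le r$, and for $u$ slightly above $u^*$ the maximum $h(u)\in(r,\pi/(2\sqrt{K_+}))$ is attained at an interior point of $\gamma_u$ (the endpoint values are at most $r$), where your Hessian estimate forces $(f\circ\gamma_u)''>0$ and forbids an interior maximum. Hence $h(1)\le r$ and $\gamma\subset\overline{B(x,r)}$. With this insertion your outline becomes a correct proof.
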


Let $B$ be an open set in $M$ such that $\overline{B}$ is convex. Let $P\subset B$ be a geodesic convex polyhedron with vertices $\{p_1, ... p_k\}$. Let $d$ denote the Riemannian distance function in $M$. Let $\epsilon: \overline{B} \to \R$ be the smooth function
$$ \epsilon(x) = \frac{1}{2k} \sum_{i=1}^k d(x, p_i)^2$$
The gradient of $\epsilon$ is given by
$$ \mbox{grad}(\epsilon)(x)=-\frac{1}{k} \sum_i exp_x^{-1}(p_i)$$
At any point $x\in \del P$, this gradient is therefore a vector pointing outward from $P$. And hence a minimum of $\epsilon$ lies in the interior of $P$. Karcher proved that when $B$ is small enough, $\epsilon$ is convex and hence this minimum is unique. He in fact proved this in more generality for sets of measure 1 (as opposed to a set of $k$ points with point measure $1/k$) and with an explicit bound on the convexity of $\epsilon$. The following lemma follows from Theorem 1.2 of \cite{Kar} (see also Lemma 3 of \cite{DyeVegWin}):

\begin{lemma}\label{center}
If $\{p_1, ..., p_k\} \subset B(x, r) \subset M$ with 
$$r< \rho=\min\left\{\frac{i(M)}{2}, \frac{\pi}{4\sqrt{K_+}}\right\}$$
Then the function $\epsilon$ has a unique minimum in $B(x, r)$.
\end{lemma}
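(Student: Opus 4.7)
The plan is to establish that $\epsilon$ is strictly geodesically convex on the compact set $\overline{B(x,r)}$ and that its unique global minimiser cannot lie on the boundary. First, I note that the hypothesis $r<\min\{i(M)/2,\pi/(4\sqrt{K_+})\}$ is strictly stronger than the Chavel bound in the preceding lemma, so $\overline{B(x,r)}$ is convex: any two points $y,z\in\overline{B(x,r)}$ are joined by a unique minimising geodesic of length at most $2r<\pi/(2\sqrt{K_+})$, and the inverse exponential $\exp_y^{-1}(p_i)$ is well-defined and smooth for $y\in\overline{B(x,r)}$.

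Next, the core estimate is Rauch Hessian comparison applied to the squared distance $f_p(y)=\half d(y,p)^2$. For $y\in\overline{B(x,r)}$, $p\in B(x,r)$ and $v\in T_yM$, comparison with the constant-curvature model gives
\[
\mbox{Hess}(f_p)_y(v,v) \;\geq\; \sqrt{K_+}\,d(y,p)\cot\!\bigl(\sqrt{K_+}\,d(y,p)\bigr)\,\|v\|^2,
\]
with the right-hand side read as $\|v\|^2$ when $K_+\leq 0$. Since $d(y,p)\leq 2r<\pi/(2\sqrt{K_+})$ throughout $\overline{B(x,r)}$, the cotangent factor is uniformly positive; averaging over $i=1,\dots,k$ shows that $\mbox{Hess}(\epsilon)$ is uniformly positive definite on $\overline{B(x,r)}$, so $\epsilon$ is strictly geodesically convex there.

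Strict convexity together with continuity on the compact set $\overline{B(x,r)}$ yields existence and uniqueness of a minimiser $y^*$. To force $y^*$ into the open ball, I would fix any boundary point $y\in\del B(x,r)$ with outward unit normal $\nu$, which is well-defined because $\del B(x,r)$ is a smooth distance sphere inside the injectivity radius. For each $p_i\in B(x,r)$, the minimising geodesic from $y$ to $p_i$ lies in the convex set $\overline{B(x,r)}$ and immediately enters its interior, so $\langle\exp_y^{-1}(p_i),\nu\rangle<0$; summing these inequalities gives
\[
\langle -\mbox{grad}(\epsilon)(y),\nu\rangle \;=\; \frac{1}{k}\sum_{i=1}^k \langle\exp_y^{-1}(p_i),\nu\rangle \;<\; 0,
\]
so $y$ cannot be a critical point of $\epsilon$. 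Thus $y^*\in B(x,r)$, as required.

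The hard part will be the Hessian comparison estimate above, which is the substantive content of Karcher's Theorem 1.2; once it is granted, existence, uniqueness, and interior location of the minimiser are elementary consequences of convex analysis carried out in a convex Riemannian ball.
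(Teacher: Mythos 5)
Your proposal is correct and follows essentially the same route as the paper, which offers no independent proof: it cites Karcher's Theorem 1.2 for the convexity of $\epsilon$ and notes, exactly as you do, that $\mbox{grad}(\epsilon)$ points outward along the boundary so the minimiser must be interior. Your write-up merely makes explicit the Hessian comparison estimate that underlies Karcher's convexity claim (and correctly explains why the radius bound is $\pi/(4\sqrt{K_+})$ rather than $\pi/(2\sqrt{K_+})$), while still deferring that estimate to Karcher, so there is no substantive divergence.
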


\begin{definition}
Given a convex geodesic polyhedron $P \subset B(x, \rho) \subset M$ with vertices $\{p_1, ..., p_k\}$, we call this unique minimum $b(P)$ of $\epsilon$ in $P$ the barycenter of $P$. 
\end{definition}

\begin{lemma}\label{isomcenter}
Let $\phi$ be an isometry of $(F, g)$, let $P\subset B(x_0, \rho)$ be a convex geodesic polyhedron and let $Q=\phi(P)$. Then $\phi(b(P))=b(Q)$.
\end{lemma}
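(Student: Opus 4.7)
The plan is to exploit the fact that the barycenter is defined as the unique minimum of a distance-squared sum, and both the sum and the domain transform equivariantly under an isometry.

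First I would set up notation: let $P$ have vertices $\{p_1,\ldots,p_k\}$ so $Q=\phi(P)$ has vertices $\{\phi(p_1),\ldots,\phi(p_k)\}$. Since $\phi$ is an isometry, it sends $B(x_0,\rho)$ onto $B(\phi(x_0),\rho)$, and in particular $Q\subset B(\phi(x_0),\rho)$, so Lemma \ref{center} applies to both $P$ and $Q$ and the barycenters $b(P)$ and $b(Q)$ are both uniquely defined.

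The key computation is equivariance of the objective functions. Writing $\epsilon_P(x)=\tfrac{1}{2k}\sum_i d(x,p_i)^2$ and $\epsilon_Q(y)=\tfrac{1}{2k}\sum_i d(y,\phi(p_i))^2$, the isometry property $d(\phi(x),\phi(p_i))=d(x,p_i)$ gives $\epsilon_Q\circ\phi=\epsilon_P$. Therefore $x$ minimises $\epsilon_P$ over $B(x_0,\rho)$ if and only if $\phi(x)$ minimises $\epsilon_Q$ over $B(\phi(x_0),\rho)$.

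Applying this to $x=b(P)$ shows that $\phi(b(P))$ is a minimum of $\epsilon_Q$ in $B(\phi(x_0),\rho)$. By the uniqueness clause of Lemma \ref{center} this minimum equals $b(Q)$, giving $\phi(b(P))=b(Q)$. There is no real obstacle here; the only subtlety worth flagging is checking that the ``minimum in $P$'' from the definition agrees with the ``minimum in the ambient ball'' used by Lemma \ref{center}, which is why the gradient-pointing-outward remark preceding the definition of $b(P)$ is needed, and this applies verbatim to $Q$ as well.
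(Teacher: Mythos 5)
Your proposal is correct and follows essentially the same route as the paper: establish $\epsilon_Q\circ\phi=\epsilon_P$ from the isometry property and then invoke the uniqueness of the minimum from Lemma \ref{center}. The only cosmetic difference is that the paper phrases the last step via $\epsilon_Q=\epsilon_P\circ\phi^{-1}$ to see that the minimum values agree, which is equivalent to your argument.
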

\begin{proof}
Let $V(P)=\{p_1, ..., p_k\}$ and $V(Q)=\{q_1, ..., q_k\}$ be the set of vertices of $P$ and $Q$ respectively. Then $\phi(V(P))=V(Q)$ and so for all $x\in P$, 
$$\epsilon_Q (\phi (x)) =  \frac{1}{2k} \sum_{i=1}^k d(\phi(x), q_i)^2 = \frac{1}{2k} \sum_{i=1}^k d(\phi(x), \phi(p_i))^2$$
As $\phi$ is an isometry, so it follows that $\epsilon_Q \circ \phi = \epsilon_P$. Let $\epsilon_P(b(P))=m$ be the minimum value of $\epsilon_P$. So $\epsilon_Q(\phi(b(P)))=\epsilon_P(b(P))=m$ and as $\epsilon_Q=\epsilon_P \circ \phi^{-1}$ so the minimum value of $\epsilon_Q$ is also $m$. As $b(Q)$ is the unique minima of $\epsilon_Q$ so  $\phi(b(P))=b(Q)$.
\end{proof}

We show below that a periodic surface automorphism is simplicial with respect to some triangulation.

\begin{lemma}\label{periodicsim}
Let $H$ be a finite subgroup of the group of automorphisms of a compact surface $F$. There exists a triangulation $\tau$ of $F$ such that each $\phi\in H$ is a simplicial map with respect to $\tau$. 
\end{lemma}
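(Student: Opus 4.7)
The plan is to first equivariantize the geometry so that $H$ acts by isometries, then build an $H$-invariant cell decomposition of $F$, and finally turn it into a triangulation using the canonical barycentric subdivision provided by Lemmas~\ref{center} and \ref{isomcenter}.

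First I would arrange that the action of $H$ is smooth (a standard reduction for finite group actions on surfaces) and then average any Riemannian metric over $H$ to obtain a metric $g$ on $F$ with respect to which every $\phi \in H$ is an isometry. Set $\rho = \min\{i(F)/2,\,\pi/(4\sqrt{K_+})\}$ as in Lemma~\ref{center}, so that any convex geodesic polygon of diameter less than $\rho$ has a well-defined barycenter.

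Next I would pick any smooth triangulation $\tau_0$ of $F$, subdivide it until every simplex lies inside an open ball of radius less than $\rho/3$, and straighten the edges to geodesics. Let $\Gamma = \bigcup_{\phi \in H} \phi(\tau_0^{(1)})$ be the $H$-orbit of its $1$-skeleton. After a small equivariant perturbation (perturb $\tau_0$, then re-translate) one may arrange that edges coming from distinct translates meet transversely in finitely many points. Then $\Gamma$ is an $H$-invariant graph on $F$ and carves out a cell complex $\Sigma$: the $0$-cells are the translated vertices of $\tau_0$ together with the transverse intersection points, the $1$-cells are the arcs of $\Gamma$ between successive $0$-cells, and the $2$-cells are the components of $F \setminus \Gamma$. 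Each $2$-cell sits inside a simplex of some $\phi(\tau_0)$, hence inside a convex geodesic ball of radius less than $\rho$; after one further round of subdivision, if needed, one ensures every $2$-cell is a convex geodesic polygon in the sense of Lemma~\ref{center}. By construction, $H$ permutes the cells of $\Sigma$.

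Finally, define $\tau$ to be the barycentric subdivision of $\Sigma$: the vertices of $\tau$ are the barycenters $b(C)$ of cells $C$ of $\Sigma$ (with $b$ of a $0$-cell being the vertex itself and $b$ of a $1$-cell its geodesic midpoint), and the $2$-simplices of $\tau$ are the flags $b(C_0) < b(C_1) < b(C_2)$ along chains $C_0 \subset \partial C_1 \subset \partial C_2$ of cells of $\Sigma$. By Lemma~\ref{isomcenter}, every $\phi \in H$ satisfies $\phi(b(C)) = b(\phi(C))$. Since $H$ already permutes the cells of $\Sigma$, it therefore permutes the vertices of $\tau$ and sends flags of $\Sigma$ to flags of $\Sigma$, so each $\phi \in H$ is simplicial with respect to $\tau$.

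The main obstacle I anticipate is the second step: making $\Gamma$ a bona fide cell decomposition whose $2$-cells are convex geodesic polygons of diameter less than $\rho$. Transversality of edges from different translates requires a mild $H$-equivariant perturbation of $\tau_0$, and convexity of each $2$-cell requires starting from a fine enough mesh; both are standard but must be carried out so as to preserve $H$-invariance. Once this combinatorial input is secured, Lemmas~\ref{center} and \ref{isomcenter} deliver the equivariant triangulation immediately.
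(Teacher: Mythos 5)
Your proposal is correct and follows essentially the same route as the paper: average the metric so that $H$ acts by isometries, take the common refinement of the $H$-translates of a fine geodesic triangulation (your complementary regions of $\Gamma$ are exactly the paper's convex cells $\cap_i h_i(\Delta_i)$), and subdivide equivariantly using the Riemannian barycenters of Lemmas~\ref{center} and~\ref{isomcenter}. The only cosmetic differences are that the paper cones each convex cell from its barycenter rather than taking a full flag subdivision, and it dispenses with your transversality perturbation, since inside a convex ball two distinct geodesic edges already meet in at most one point or along a common subarc.
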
 
\begin{proof}
Let $n$ be the order of $H$. Let $g_0$ be a Riemannian metric on $F$ and let $g=\sum_{h\in H} h^*(g_0)$. Any $\varphi \in H$ acts on $H$ as $\varphi(h)=h \circ \varphi$ for all $h\in H$ to give a bijection of $H$. So $\varphi^*g = \sum_{h\in H} (h \circ \varphi)^*(g_0) = g$, i.e., $\varphi$ is an isometry of $(F, g)$. 

Let $\tau_0$ be a geodesic triangulation of $F$ such that each simplex lies in a convex ball of radius less than $\rho$ as defined in Lemma \ref{center}. Let $\Pi$ denote the polyhedral complex obtained by intersecting the simplexes of $h(\tau_0)$ for $h\in H$. In other words, if $H=\{h_1, ..., h_n\}$ then each cell $P$ of $\Pi$ is obtained by taking $n$ triangles $\Delta_1$, ..., $\Delta_n$ in $\tau_0$ (possibly with repetition) and taking the intersection $P=\cap_{i=1}^n h_i (\Delta_i)$. As each $h_i(\Delta_i)$ is convex so $P$ is a convex polyhedron. As $\varphi$ induces a permutation of $H$ so $\varphi$ is a map sending polyhedra of $\Pi$ to polyhedra.


For each polyhedron $P$ of $\Pi$ let $V(P)$ be its set of vertices and let $b(P)\in int(P)$ denote its Riemannian center of mass. $P$ can be subdivided into the triangulation $\tau_P = b(P) \star \del P$ by dividing along edges joining $b(P)$ to the vertices of $P$. Let $\tau$ be the triangulation obtained by replacing each polyhedron $P \in \Pi$ with the triangulated polyhedron $\tau_P$. 

For any $\varphi \in H$ as $\varphi$ is a polyhedral map on $\Pi$, so if $P$ is a polyhedron in $\Pi$ so is $Q=\varphi(P)$. As $\varphi(V(P))=V(Q)$ and by Lemma \ref{isomcenter} $\varphi(b(P))=b(Q)$, so $\varphi$ is in fact a simplicial map from $\tau_P$ to $\tau_Q$. Hence $\varphi$ is simplicial over $\tau$ as required.
\end{proof}

\begin{figure}
\centering
\def\svgwidth{0.8\columnwidth}
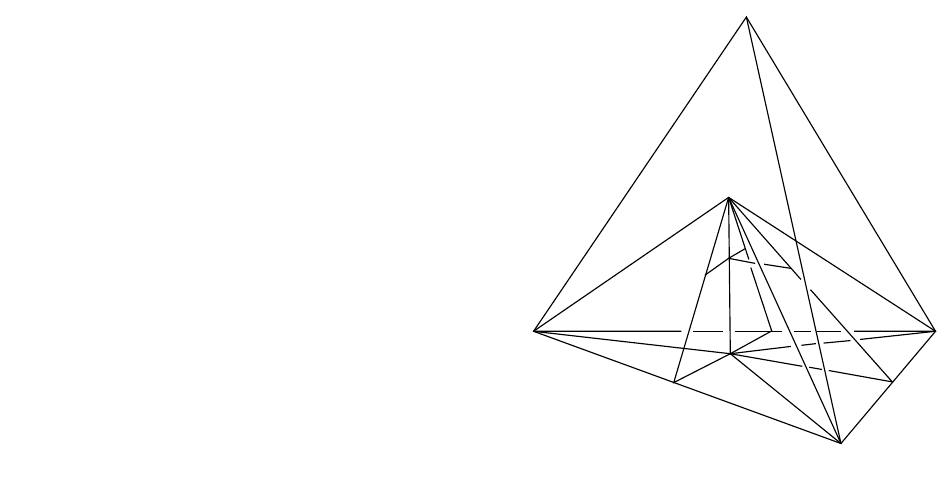
\caption{(i) Converting a 3-simplex into a prism (ii) Consistently doing so in the barycentric subdivision of a 3-simplex}\label{baryprism}
\end{figure}

We now prove the main theorem of this article:
 \begin{proof}[Proof of \ref{mainthm}]
Given a 3-simplex $\Delta$ with vertices $a, b, c, d$ we can convert it to a prism that we denote as $[a; b, c; d]$ by introducing a vertex $x(b)$ in the edge $[a,b]$ a vertex $x(c)$ in the edge $[a,c]$ and an edge $[x(b), x(c)$] that divides the face $[a,b,c]$ into a triangle which contains $a$ and a quadrilateral which contains $b$ and $c$, as shown in Figure \ref{baryprism}(i). This gives a prism with triangular faces $[a,x(b),x(c)]$ and $[b,c,d]$ and quadrilateral faces $[b, c, x(c), x(b)]$, $[c, d, a, x(c)]$ and $[d, b, x(b), a]$.

 To change tetrahedra to prisms consistently, we work instead with the barycentric subdivision $\beta(\tau)$ of a simplicial triangulation $\tau$ of $M$. Let $\beta(\sigma)$ denote the barycenter of a simplex $\sigma$. Any 3-simplex in $\beta(\tau)$ is of the form $[\beta(\Delta), \beta(F), \beta(e), v]$ where $\Delta$ is a 3-simplex of $\tau$, $F$ a 2-simplex of $\Delta$, $e$ an edge of $F$ and $v$ a vertex of $e$. To obtain a prism complex structure we change each such simplex to the prism $[\beta(\Delta); \beta(F), \beta(e); v]$, by introducing a vertex $x(F)$ on the edge $[\beta(\Delta), \beta(F)]$, a vertex $x(e)$ on the edge $[\beta(\Delta), \beta(e)]$ and by splitting the face $[\beta(\Delta), \beta(F), \beta(e)]$ along an edge $[x(F), x(e)]$. See Figure \ref{baryprism}(ii) for such a construction on the 3-simplexes of $\beta(\tau)$ in $\Delta$ which contain the 2-simplexes of $\beta(F)$, for a fixed face $F$ of $\Delta$. Such a change on each 3-simplex of $\beta(\tau)$ is consistent. Also any horizontal face is either of the form $[\beta(\Delta), x(F), x(e)]$ and lies in the interior of a 3-simplex of $\tau$ or is of the form $[\beta(F), \beta(e), v]$. Varying $\Delta$, $F\in \Delta$ and $e\in F$, the union of the faces $[\beta(F), \beta(e), v]$ gives the barycentric subdivision of the 2-skeleton of $\tau$. In either case, horizontal edges only meet other horizontal edges, so this construction transforms a simplicial complex to a prism complex. As every compact 3-manifold has a simplicial complex structure it therefore has a prism complex structure. This construction does not however give a special prism complex structure, in particular, the interior horizontal edge $[x(F), x(e)]$ lies in only two prisms.

Suppose $M$ admits a special prism complex structure. Foliate each prism $\Delta \times I$ by intervals $x \times I$. Each face in the interior of the complex is shared by exactly two prisms while each boundary face lies in one prism. Furthermore as horizontal edges are identified only with horizontal edges, so horizontal faces are identified only with horizontal faces. Therefore points in the interior of faces have fibered neighborhoods that are fiber-wise homeomorphic to the fibered product $D\times I$ if the point is in the interior of $M$ and $D^+ \times I$ if the point is on the boundary of $M$. The star of an edge is the union of all prisms which contain the edge. The dual graph of the star of an interior edge of the complex is regular of degree 2 and is therefore a circuit. So points in the interior of vertical edges have neighborhoods fiber-wise homeomorphic to $D\times I$. Similarly points on a vertical edge on the boundary has neighbourhoods fiber-wise homeomorphic to $D^+ \times I$. Exactly 4 prisms meet at a horizontal edge so exactly 2 horizontal faces meet along a horizontal edge. Consequently, the union of all horizontal faces gives a triangulated surface $S$. The interior of the union of all triangles containing a vertex $v$ in $S$ is a disk. All the prisms with a horizontal face on this disk which lie on the same side of the disk, share the vertical edge containing $v$. And so the union of all prisms containing such a vertex in $M$ contains a fibered neighbourhood of $v$ fiber-wise homeomorphic to $D \times I$. Therefore $M$ is foliated by 1-dimensional leaves. 

As no horizontal face lies on the boundary so the dual graph of the prism complex with edges corresponding to horizontal faces and vertices corresponding to the prisms is also a circuit. The union of the corresponding prisms is then either a solid torus or a solid Klein bottle foliated by circles. This shows that the 1-dimensional foliation constructed above has only circle leaves. Epstein\cite{Eps} has shown that any compact 3-manifold foliated by circles is a Seifert fiber space. As the surface $S$ consisting of horizontal faces is transverse to this foliation, so by Theorem \ref{SFSThm} either $\del M\neq \emptyset$, $SE(M)\neq\emptyset$ or $M$ is closed with $e(M)=0$  as required.

Conversely, if $M$ is a Seifert fiber space with $\del M\neq\emptyset$, $SE(M)\neq\emptyset$ or $e(M)=0$ then by Corollary \ref{SFSCor}, $M= F\times I/\phi$ where $\phi:F\times\{1\}\rightarrow F\times \{0\}$ is a periodic monodromy or $M= F\times I/\psi_i$ i=0,1 where $\psi_i: F\times \{i\}\rightarrow F\times \{i\}$ is an involution. 

If $M=F \times I/\phi$ then let $H$ be the finite subgroup of $Aut(F)$ generated by $\phi$. By Lemma \ref{periodicsim}, there exists a triangulation $\tau$ of $F$ with respect to which $\phi$ is simplicial. Let $\tau \times I$ be a prism complex structure on $F \times I$ where each prism is of the form $\Delta \times I$ for $\Delta$ a triangle of $\tau$. Then $\Pi = \tau \times I/\phi$ is the required prism triangulation $M$. Similarly, if $M=F \times I/\psi_i$ then let $H$ be the finite subgroup of $Aut(F)$ generated by $\psi_1$ and $\psi_2$. By Lemma \ref{periodicsim}, there exists a triangulation $\tau$ of $F$ with respect to which both $\psi_1$ and $\psi_2$ are simplicial. Then $\Pi=\tau \times I/\psi_i$ is the required prism triangulation of $M$.
 \end{proof}

\begin{acknowledgements}
The first author was supported by the MATRICS grant of Science and Engineering Research Board, GoI.
\end{acknowledgements}

\bibliographystyle{alpha}
\bibliography{Prism}









\end{document}